\newtheorem{lemma}{Lemma}[section]
\newtheorem{theorem}[lemma]{Theorem}
\newtheorem*{theorem*}{Theorem}
\newtheorem{corollary}[lemma]{Corollary}
\newtheorem{proposition}[lemma]{Proposition}
\newtheorem*{proposition*}{Proposition}
\theoremstyle{definition}
\newtheorem{definition}{Definition}
\newtheorem*{claim*}{Claim}
\newtheorem*{remark}{Remark}
\newtheorem*{remarks}{Remarks}
\newcommand{\D}{{\mathbb D}}
\newcommand{\C}{{\mathbb C}}
\newcommand{\E}{{\mathbb E}}
\newcommand{\N}{{\mathbb N}}
\renewcommand{\P}{{\mathbb P}}
\newcommand{\Q}{{\mathbb Q}}
\newcommand{\R}{{\mathbb R}}
\newcommand{\T}{{\mathbb T}}
\newcommand{\Z}{{\mathbb Z}}
\newcommand{\CM}{{\mathcal M}}
\newcommand{\CX}{{\mathcal X}}
\newcommand{\bn}{{\mathbf{n}}}
\newcommand{\one}{\mathbf{1}}
\newcommand{\norm}[1]{\left\Vert #1\right\Vert}
\DeclareMathOperator{\UD}{UD-lim}
\DeclareMathOperator{\CD}{D-lim}
\DeclareMathOperator{\reel}{Re}
\renewcommand{\Re}{\reel}
\begin{document}

\title[An averaged Chowla-Elliott conjecture
along  independent polynomials]{An averaged Chowla and Elliott  conjecture
along  independent polynomials}

\author{Nikos Frantzikinakis}
\address[Nikos Frantzikinakis]{University of Crete, Department of Mathematics and Applied Mathematics, Voutes University Campus, Heraklion 71003, Greece} \email{frantzikinakis@gmail.com}

\begin{abstract}
We generalize a result of  Matom\"aki,  Radziwi{\l}{\l}, and Tao, by proving an  averaged version of  a conjecture of Chowla and a conjecture of  Elliott regarding correlations of the Liouville function, or more general bounded multiplicative functions, with shifts given by independent polynomials in several variables.
A new feature is that we recast the problem in ergodic terms and   use a  multiple ergodic theorem
to prove it;
its hypothesis is verified using
recent results by  Matom\"aki and  Radziwi{\l}{\l} on mean values of multiplicative functions on typical short intervals.
We deduce several consequences about patterns that  can be found on the range of  various arithmetic sequences along shifts of independent polynomials.
\end{abstract}


\subjclass[2010]{Primary:  11N37; Secondary: 11N25, 37A45. }

\keywords{Multiplicative functions, sign patterns, Chowla conjecture, Liouville function, M\"obius function.}


\maketitle
\section{Introduction}
Let $\lambda\colon \N\to \{-1,+1\}$ be the Liouville function which  is defined to be $1$ on integers with an even number of prime factors, counted with  multiplicity, and $-1$ elsewhere.
A well known conjecture of Chowla \cite{Ch65} asserts that if  $n_1,\ldots, n_\ell\in \N$ are distinct, then
$$
\lim_{M\to \infty} \frac{1}{M}\sum_{m=1}^M\lambda(m) \, \lambda(m+n_1)\cdots \lambda(m+n_\ell)=0.
$$
The conjecture remains open even when  $\ell=1$. Very recently, a  version involving logarithmic averages
was established for $\ell=1$ in \cite{Tao15} and an averaged form of Chowla's conjecture
was established in \cite{MRT15}. The latter implies that if $(M_k)_{k\in\N}$ is a subsequence of the positive integers such that for  every $\bn=(n_1,\ldots, n_\ell)\in \N^\ell$ the  correlation limit
$$
\text{C}_{\text{lin}}(\bn):=\lim_{k\to\infty}\frac{1}{M_k}\sum_{m=1}^{M_k}\lambda(m) \, \lambda(m+n_1)\cdots \lambda(m+n_\ell)
$$
 exists, then the sequence
$(\text{C}_{\text{lin}}(\bn))$ converges to $0$ in uniform density (see Definition~\ref{D:UD-lim}).
We  extend this result to the case where the linear polynomials $n_1,\ldots, n_\ell$ are replaced by any collection of  independent polynomials $p_1,\ldots, p_\ell \colon \N^r \to \Z$ where $\ell, r\in \N$.
In particular, we show that the sequence $(\text{C}_\text{pol}(\bn))$,  defined by
$$
\text{C}_{\text{pol}}(\bn):=\lim_{k\to\infty}\frac{1}{M_k}\sum_{m=1}^{M_k}\lambda(m) \, \lambda(m+p_1(\bn))
\cdots \lambda(m+p_\ell(\bn)), \quad \bn\in \N^r,
$$
converges to $0$ in uniform density. A particular case of interest is when $r=1$ and $p_j(n)=n^j$ for  $j=1,\ldots,\ell$.

More generally, Elliott conjectured \cite[Conjecture~II]{El90,El94} that  if the multiplicative functions $f_0,\ldots, f_\ell$ take values on the complex unit disc and are aperiodic (see Definition~\ref{D:aperiodic}), then
for all distinct   $n_1,\ldots, n_\ell\in \N$ we have
$$
\lim_{M\to \infty} \frac{1}{M}\sum_{m=1}^Mf_0(m) \, f_1(m+n_1)\cdots f_\ell(m+n_\ell)=0.
$$
In \cite[Theorem~B.1]{MRT15} it was shown that in this generality the conjecture is false on a technicality and
a  modification of the conjecture was suggested where
 one works under the somewhat more restrictive assumption of strong aperiodicity (see Definition~\ref{D:uniformly}).  An averaged form of the
modified conjecture was proved in    \cite[Theorem~1.6]{MRT15} and we give a polynomial variant of this result in Theorem~\ref{T:main}.

As was the case in  \cite{MRT15}, where correlations along linear shifts
  were studied,  our argument
is based on a recent result from \cite{MR15} on averages of multiplicative functions on typical short intervals.   But due to the polynomial nature of our problem,
a serious additional difficulty appears, since Fourier transform techniques (and their higher order variants) are not suited for the study of  polynomial shifts.\footnote{Moreover, the recent concatenation techniques for Gowers norms from \cite{TZ16a,TZ16b}  are devised to treat  different
averages that are taken jointly on  the parameters $m$ and $\bn$;
 they also apply  to  different classes of polynomials (see Section~\ref{SS:further}).} To  overcome this obstacle, we recast the  problem in ergodic terms. Using a
 rather
  deep rooted result  from ergodic theory (Theorem~\ref{T:PolyErgodic}) we deduce that the polynomial correlation sequences  are controlled by some simpler linear ones 
 which  can
be handled using  number theoretic tools like those used in  \cite{MRT15}.



We  use the previous results in order to deduce  several consequences about patterns that can be found on the range of  various arithmetic sequences along shifts of independent polynomials.
 For instance, we show that:

$\bullet$ For every strongly aperiodic  multiplicative function $f$ with values in $\{-1,+1\}$,
 for all $n\in \N$ outside  a set of
 natural density $0$, each  sign pattern  of size $\ell+1$ is taken by $f$ along  progressions of the form $m, m+n, m+n^2, \ldots, m+n^\ell$, and  in fact, with  natural density (with respect to  the variable $m$) that converges to  $2^{-(\ell+1)}$ as $n\to \infty$.

\smallskip

 $\bullet$   For all $b_i\in \N$, $a_i\in \{0,\ldots, b_i-1\}$,  $i=0,\ldots, \ell$, and  all $n\in \N$ outside a set of
 natural  density $0$, the set of $m\in \N$ for which the integers  $m, m+n, m+n^2, \ldots, m+n^\ell$, have $a_0\! \! \mod{b_0}, \ldots, a_\ell\! \! \mod{b_\ell}$  prime factors respectively  is non-empty,  and in fact itsnatural  density
converges to  $(\prod_{j=0}^\ell b_j)^{-1}$ as $n\to \infty$.


In the next section,  we give the precise statements of the results alluded to in  the previous discussion and
also give several  relevant  refinements and open problems.

\section{Main results}
\subsection{Definitions and notation}\label{SS:aperiodic}
In order to  facilitate exposition, we introduce some definitions and notation.
Throughout this article, for  $N\in\N$  we let  $[N]:=\{1,\dots,N\}$.
If $A$ is a finite non-empty set, we let $\E_{a\in A}:=\frac{1}{|A|}\sum_{a\in A}$.
Furthermore, with $\P$ we denote the set of prime numbers.

\begin{definition}\label{D:aperiodic} A function $f\colon \N\to \C$ is called \emph{multiplicative} if
 $$
 f(mn)=f(m)f(n) \ \text{ whenever } \  (m,n)=1.
 $$
 It is called \emph{completely multiplicative} if  the previous identity holds for every $m,n\in \N$.

For convenience,  we extend all multiplicative functions to $\Z$ by letting $f(-n)=f(n)$ and $f(0)=0$,
and  let
$$
\CM:=\{f\colon \Z\to \C \text{ multiplicative such that  } |f(n)|\leq 1 \text{ for every } n\in \Z\}.
$$

A \emph{Dirichlet character}, typically denoted with $\chi$,  is a periodic completely multiplicative function such that $\chi(1)=1$.

 We say that   $f\in \CM$ is \emph{aperiodic} 
 if
$\lim_{N\to\infty}\E_{n\in [N]}f(an+b)=0$  for every $a,b\in \N$, equivalently, if
$\lim_{N\to\infty}\E_{n\in [N]}f(n)\, \chi(n)=0$ for every Dirichlet character $\chi$.
\end{definition}
We define the distance between two multiplicative functions as in  \cite{GS07,GS16}:
\begin{definition}\label{D:D}
If $f,g\in \CM$ we let $\D\colon \CM\times \CM\to [0,\infty]$ be given by
$$
\D(f,g)^2:=\sum_{p\in \P} \frac{1}{p}\,\bigl(1-\Re\bigl(f(p) \overline{g(p)}\bigr)\bigr).
$$
We also let $\D\colon \CM\times \CM\times \N \to [0,\infty)$ be given by
$$
\D(f,g;N)^2:=\sum_{p\in \P\cap [N]} \frac{1}{p}\,\bigl(1-\Re\bigl(f(p) \overline{g(p)}\bigr)\bigr)
$$
and $M\colon \CM\times \N \to [0,\infty)$ be given by
$
M(f;N):=\min_{|t|\leq N} \D(f, n^{it};N)^2.
$
\end{definition}

A celebrated  theorem of Hal\'asz~\cite{Hal68} states that
if   $f\in \CM$, then it  has zero  mean value  if and only if
for every $t\in \R$  we either have $\D(f, n^{it})=\infty$
or
$
f(2^k)= -2^{ikt}$ for all $k\in \N$.

 For our purposes, we need information on averages of multiplicative functions taken on typical short intervals. One such result is  Theorem~\ref{T:MRT} below and its  assumption motivates the following definition:
\begin{definition}\label{D:uniformly}
The multiplicative function $f\in \CM$ is
{\em strongly  aperiodic}
 if $M(f\cdot\chi;N)\to \infty$ as $N
\to \infty$ for every Dirichlet character $\chi$.
\end{definition}
Note  that strong aperiodicity implies aperiodicity. The converse is
not in general true (see  \cite[Theorem~B.1]{MRT15}), but it is
true for  real valued  multiplicative functions (see  \cite[Appendix~C]{MRT15}). In particular,  the  Liouville and the M\"obius function are strongly aperiodic.
More examples of strongly aperiodic multiplicative functions are given in  Corollary~\ref{C:aperiodic}.

A subset $Z$ of $\N^r$ has
 {\em natural density $0$} if
$\lim_{N\to\infty}\frac{|Z\cap [N]^r|}{N^r}=0$, and
{\em  uniform density $0$} if
$\lim_{|I|\to\infty}\frac{|Z\cap I|}{|I|}=0$, where
the last limit is taken over all parallelepipeds in $\N^r$ with side lengths tending to $\infty$.
We make frequent use of the notion of convergence in uniform density which is defined as follows:
\begin{definition}\label{D:UD-lim}
Let $r\in\N$. We say that the sequence $a\in \ell^\infty(\N^r)$ {\em converges in uniform density} to a constant
$c\in \C$,
and write
$\UD_{\bn\to\infty} a(\bn)=c$, if any of the following two equivalent conditions hold:
\begin{enumerate}

\item For every $\varepsilon>0$ the set $\{
\bn\in \N^r\colon |a(\bn)-c|\geq \varepsilon\}$ has uniform density $0$;

\item $\lim_{N\to \infty} \E_{\bn\in I_N}|a(\bn)-c|=0$ for every sequence of parallelepipeds
$(I_N)_{N\in \N}$ in $\N^r$ with side lengths tending to $\infty$.
\end{enumerate}
\end{definition}
\begin{remark}
If we replace the uniform density  with the natural density, then the second condition becomes
$\lim_{N\to \infty} \E_{\bn\in [N]^r}|a(\bn)-c|=0$, and we can add a third equivalent condition, namely,
  $\lim_{|\bn|\to \infty, \bn\notin Z\, }a(\bn)=c$ for some $Z\subset \N^r$ with natural density $0$.
\end{remark}

Lastly, if  ${\bf M}:=([M_k])_{k\in\N}$ is a  sequence of intervals with $M_k\to \infty$ and $a\in \ell^\infty(\N)$, we let
$
\E_{m\in {\bf M}}\, a(m):=\lim_{k\to\infty} \E_{m\in [M_k]}\, a(m)
$
assuming that the limit exists.

\subsection{Averaged Chowla-Elliot conjecture along independent sequences}
We say that the polynomials  $p_1,\ldots, p_\ell \colon \N^r\to \Z$ are \emph{independent}  if
the set $\{1,p_1,\ldots, p_\ell\}$ is linearly independent.
Our main result is the following:
\begin{theorem}\label{T:main}
Let  $p_1,\ldots, p_\ell \colon \N^r\to \Z$ be  independent polynomials and
$f_0,\ldots, f_\ell\in \CM$ be multiplicative functions  at least one of which is strongly aperiodic.
 Then there exists a sequence of intervals ${\bf M}:=([M_k])_{k\in\N}$ with $M_k\to\infty$ such that
\begin{equation}\label{E:main0}
\UD_{\bn\to\infty}  \big(\E_{m\in {\bf M}}\,  f_0(m) \, f_1(m+p_1(\bn))\cdots f_\ell(m+p_\ell(\bn))\big)=0.
\end{equation}
\end{theorem}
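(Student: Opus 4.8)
The plan is to follow the ergodic route indicated in the introduction: translate the arithmetic correlation averages into multiple ergodic averages on a measure preserving system, use the polynomial ergodic theorem (Theorem~\ref{T:PolyErgodic}) to reduce the polynomial correlations to linear ones, and dispose of the linear correlations using the number-theoretic input of \cite{MR15,MRT15}. A direct attack on \eqref{E:main0} via short-interval (higher order) Fourier analysis, in the spirit of \cite{MRT15}, does not seem feasible because the shifts $p_j(\bn)$ are polynomial rather than linear in $\bn$; this is precisely why the detour through ergodic theory is made.

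\textbf{Step 1: Ergodic reformulation.} Let $x_0:=\big((f_0(m))_{m\in\Z},\ldots,(f_\ell(m))_{m\in\Z}\big)$ be a point of the compact metrizable space $X_0:=\{z\in\C:|z|\leq 1\}^{\Z\times\{0,\ldots,\ell\}}$, and let $T_0$ be the diagonal shift on $X_0$. Choose a sequence of intervals ${\bf M}:=([M_k])_{k\in\N}$ with $M_k\to\infty$ along which the empirical measures $\E_{m\in[M_k]}\,\delta_{T_0^m x_0}$ converge weak-$*$ to some probability measure $\mu$ (possible by compactness of the space of probability measures on $X_0$); we may further arrange that ${\bf M}$ refines the subsequence furnished by the averaged linear theorem of \cite{MRT15}. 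Then $\mu$ is $T_0$-invariant; write $(X,\mu,T)$ for the resulting measure preserving system and $F_0,\ldots,F_\ell\in L^\infty(\mu)$, $\|F_j\|_\infty\leq 1$, for the coordinate projections. Since the correlation functions are continuous on $X_0$, for every $\bn\in\N^r$ we have
\[
\E_{m\in{\bf M}}\,f_0(m)\prod_{j=1}^\ell f_j(m+p_j(\bn))=\int F_0\cdot\prod_{j=1}^\ell T^{p_j(\bn)}F_j\,d\mu ,
\]
and the same identity holds with each $p_j(\bn)$ replaced by an arbitrary $n_j\in\N$. Hence \eqref{E:main0} is equivalent to the assertion that $\UD_{\bn\to\infty}\int F_0\cdot\prod_{j=1}^\ell T^{p_j(\bn)}F_j\,d\mu=0$.

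\textbf{Step 2: Reduction to linear correlations, and conclusion.} Since $\{1,p_1,\ldots,p_\ell\}$ is linearly independent, Theorem~\ref{T:PolyErgodic} applies and shows that the uniform-density behaviour of the polynomial correlation sequence $\bn\mapsto\int F_0\cdot\prod_j T^{p_j(\bn)}F_j\,d\mu$ is controlled by that of the linear correlation sequences $(n_1,\ldots,n_\ell)\mapsto\int F_0\cdot\prod_j T^{n_j}F_j\,d\mu$; in particular, if the latter converge to $0$ in uniform density then so does the former. By the linear case of the identity in Step~1, this linear ergodic correlation sequence coincides with the arithmetic one $\E_{m\in{\bf M}}\,f_0(m)\prod_j f_j(m+n_j)$, and since at least one of $f_0,\ldots,f_\ell$ is strongly aperiodic, the averaged linear Elliott conjecture of \cite{MRT15} (whose proof rests on the short-interval mean value theorem of \cite{MR15}, recorded here as Theorem~\ref{T:MRT}) shows that it converges to $0$ in uniform density in $(n_1,\ldots,n_\ell)$. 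Feeding this into the previous sentence gives \eqref{E:main0}.

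\textbf{The main obstacle.} The crux is Theorem~\ref{T:PolyErgodic}, namely that for independent polynomials the uniform-density behaviour of a polynomial multiple correlation sequence is governed by that of linear correlation sequences. The natural approach is a Bergelson--Leibman PET induction (iterated van der Corput) that replaces the polynomial iterates by linear ones at the cost of passing to a suitable characteristic factor --- a nilsystem, by the theory of Host--Kra and Leibman --- the linear independence of $\{1,p_1,\ldots,p_\ell\}$ being exactly what forces the controlling quantities to be the linear correlations rather than something more intricate. The delicate point is to keep all estimates uniform over the family of parallelepipeds with side lengths tending to infinity, so that a genuine uniform-density conclusion (rather than a single convergence statement) survives. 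By comparison, the correspondence of Step~1, the bookkeeping with subsequences, and the number-theoretic input of \cite{MR15,MRT15} are routine.
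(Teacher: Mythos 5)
Your Step 1 is exactly the paper's route (the correspondence principle of Proposition~\ref{P:correspondence}), but Step 2 rests on a misreading of Theorem~\ref{T:PolyErgodic}, and this is a genuine gap. That theorem is not a statement that ``the polynomial correlation sequence is controlled by the linear correlation sequences $(n_1,\ldots,n_\ell)\mapsto\int F_0\cdot T^{n_1}F_1\cdots T^{n_\ell}F_\ell\,d\mu$''; its hypothesis is that at least one of the functions $F_0,\ldots,F_\ell$ is orthogonal to the \emph{Kronecker factor} of the system, and its conclusion is the uniform-density vanishing of the polynomial averages. Nowhere in your argument do you verify this hypothesis. Nor can it be extracted from what you do prove: uniform-density vanishing of the multilinear correlations $\int F_0\cdot T^{n_1}F_1\cdots T^{n_\ell}F_\ell\,d\mu$ involves the product of the \emph{different} functions $F_0,\ldots,F_\ell$ and says nothing about the spectral measure of any individual $F_j$ (it could be caused by $F_0$ alone while every other $F_j$ is an eigenfunction), so it cannot be ``fed into'' Theorem~\ref{T:PolyErgodic}. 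The implication you assert in passing --- UD-vanishing of the linear multiple correlations implies UD-vanishing of the polynomial ones --- is not a result stated in the paper and is not something you prove; as written, the reduction from polynomial to linear shifts simply does not go through.

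The missing ingredient is precisely the paper's Proposition~\ref{P:key}(ii): one must show that strong aperiodicity of some $f_j$ forces the corresponding $F_j$ to be orthogonal to the Kronecker factor. By Wiener's criterion this amounts to the \emph{two-point autocorrelation} statement
\[
\lim_{N\to\infty}\E_{n\in[N]}\Big|\int T^nF_j\cdot\overline{F_j}\,d\mu\Big|
=\lim_{N\to\infty}\E_{n\in[N]}\big|\E_{m\in{\bf M}}\,f_j(m+n)\,\overline{f_j(m)}\big|=0,
\]
which is Theorem~\ref{T:MRT}; the paper proves it from the Matom\"aki--Radziwi{\l}{\l}(--Tao) short-interval result (Theorem~\ref{T:RT}) via a Herglotz/Wiener decomposition and K\'atai's criterion. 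Note that the correspondence principle is deliberately set up to include conjugates, exactly so that this autocorrelation identity is available. Once Kronecker-orthogonality of $F_j$ is in hand, Theorem~\ref{T:PolyErgodic} applies directly and finishes the proof; the full multilinear averaged Elliott theorem of \cite{MRT15}, which you invoke, is not needed (and by itself does not supply the spectral information required). Your final paragraph's suggestion to reprove Theorem~\ref{T:PolyErgodic} by PET induction is also beside the point here, since the paper imports it from \cite{FK06}; the real content you must supply is the bridge from the number-theoretic hypothesis on $f_j$ to the ergodic hypothesis on $F_j$.
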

\begin{remarks}
$\bullet$ It suffices to choose $(M_k)_{k\in\N}$ so that
  the multiplicative functions $f_0,\ldots, f_\ell\in \CM$  admit correlations  along the sequence of   intervals ${\bf M}$ (see Definition~\ref{D:correlations}). Hence, we can choose $(M_k)_{k\in\N}$ to be a subsequence of any strictly increasing sequence of integers.

$\bullet$ Theorem~1.6 in \cite{MRT15} establishes (a quantitative variant of) this result  for the polynomials  $p_1(\bn)=n_1,\ldots, p_\ell(\bn)=n_\ell$. 
As noted in the introduction,
in order to establish  the polynomial extension  we lend tools from
ergodic theory, in particular, we use  Theorem~\ref{T:PolyErgodic} below.

$\bullet$ It is not hard to modify our argument in order to prove a similar conclusion for averages of the form
$\E_{m\in {\bf M}}\,  f_0(a_0m+c_0) \, f_1(a_1m+p_1(\bn))\cdots f_\ell(a_\ell m+p_\ell(\bn))$ where the polynomials $p_1,\ldots,p_\ell$ are independent,  $a_0,\ldots, a_\ell\in \Z$ are
non-zero,  and $c_0\in \Z$.
\end{remarks}

In a similar fashion, using Theorem~\ref{T:HardyErgodic} as our ergodic input we prove the following:
\begin{theorem}\label{T:Hardy}
Let  $c_1,\ldots, c_\ell\in \R^+$ be distinct non-integers and
 $f_0,\ldots, f_\ell\in \CM$.
There exists a sequence of intervals ${\bf M}:=([M_k])_{k\in\N}$ with $M_k\to\infty$  such that:
\begin{enumerate}
\item If  for some
$j\in \{0,\ldots, \ell\}$ we have  $M(f_j;N)\to \infty$ as $N\to \infty$, then
$$
\lim_{N\to \infty}\E_{n\in [N]}
\E_{m\in {\bf M}}\,  f_0(m) \, f_1(m+[n^{c_1}]))\cdots f_\ell(m+[n^{c_\ell}]) =0.
$$

\item  If  for some
$j\in \{0,\ldots, \ell\}$ we have that $f_j$ is strongly aperiodic, then
$$
\lim_{N\to \infty}\E_{n\in [N]}
 |\E_{m\in {\bf M}}\,  f_0(m) \, f_1(m+[n^{c_1}]))\cdots f_\ell(m+[n^{c_\ell}])|=0.
$$
\end{enumerate}
 \end{theorem}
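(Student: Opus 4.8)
The plan is to follow the blueprint of Theorem~\ref{T:main}, with Theorem~\ref{T:HardyErgodic} playing the role that Theorem~\ref{T:PolyErgodic} plays there. First I would fix the sequence of intervals once and for all: by a single diagonal argument one chooses $(M_k)_{k\in\N}$ (a subsequence of any prescribed sequence) so that the finite family $f_0,\dots,f_\ell,\overline{f_0},\dots,\overline{f_\ell}$ admits all relevant correlations along ${\bf M}$ in the sense of Definition~\ref{D:correlations}; by the first of the remarks following Theorem~\ref{T:main} this also makes the conclusion of that theorem available along this ${\bf M}$. Writing $c(n):=\E_{m\in{\bf M}}f_0(m)\prod_{j=1}^\ell f_j(m+[n^{c_j}])$ and applying a multiplicative correspondence principle to the tuple $(f_0,\dots,f_\ell)$ along ${\bf M}$, one obtains a measure preserving system $(X,\mu,T)$ and functions $F_0,\dots,F_\ell\in L^\infty(\mu)$, bounded by $1$, with $c(n)=\int F_0\cdot\prod_{j=1}^\ell T^{[n^{c_j}]}F_j\,d\mu$ and $\int F_j\,d\mu=\E_{m\in{\bf M}}f_j(m)$ for every $j$.

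For part (i) I would use linearity. Since $c_1,\dots,c_\ell$ are distinct positive non-integers, Theorem~\ref{T:HardyErgodic} gives $\E_{n\in[N]}\prod_{j=1}^\ell T^{[n^{c_j}]}F_j\to\prod_{j=1}^\ell\E(F_j\mid\mathcal I)$ in $L^2(\mu)$, where $\mathcal I$ is the $\sigma$-algebra of $T$-invariant sets; pairing with $F_0$ and using the tower property, $\lim_N\E_{n\in[N]}c(n)=\int\prod_{j=0}^\ell\E(F_j\mid\mathcal I)\,d\mu$. So it suffices to show $\E(F_j\mid\mathcal I)=0$ for the index $j$ with $M(f_j;N)\to\infty$. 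By the standard spectral identity $\|\E(F_j\mid\mathcal I)\|_2^2=\lim_{H\to\infty}\E_{h\in[H]}\E_{m\in{\bf M}}f_j(m+h)\overline{f_j(m)}$, and since $\E_{m\in{\bf M}}f_j(m)=0$ by the theorem of Hal\'asz, the inner average $\E_{h\in[H]}f_j(m+h)$ is a short-interval average that must be killed for $\mu$-almost every point: this is exactly the Matom\"aki--Radziwi{\l}{\l} input (Theorem~\ref{T:MRT}), whose hypothesis is supplied by $M(f_j;N)\to\infty$.

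For part (ii) the absolute value blocks the direct argument, so I would first square: $\E_{n\in[N]}|c(n)|\le(\E_{n\in[N]}|c(n)|^2)^{1/2}$, and observe that on the product system $(X\times X,\mu\times\mu,T\times T)$ one has $|c(n)|^2=\int(F_0\otimes\overline{F_0})\cdot\prod_{j=1}^\ell(T\times T)^{[n^{c_j}]}(F_j\otimes\overline{F_j})\,d(\mu\times\mu)$. Applying Theorem~\ref{T:HardyErgodic} on the product system, $\lim_N\E_{n\in[N]}|c(n)|^2=\int\prod_{j=0}^\ell\E(F_j\otimes\overline{F_j}\mid\mathcal I_{T\times T})\,d(\mu\times\mu)$, and the factor attached to a strongly aperiodic $f_j$ vanishes if and only if the self-correlation $h\mapsto\E_{m\in{\bf M}}f_j(m)\overline{f_j(m+h)}$ tends to $0$ in uniform density. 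That last statement is an instance of the averaged Chowla--Elliott theorem with a single linear shift, namely Theorem~\ref{T:main} applied with one independent linear polynomial $p_1(h)=h$ and the pair $f_j,\overline{f_j}$ (both still strongly aperiodic); this is precisely why one needs strong aperiodicity here rather than merely $M(f_j;N)\to\infty$. Since the product system is built from the same ${\bf M}$, the correlations invoked are among those arranged in the first step.

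The crux, and the place I expect the real work to sit, is Theorem~\ref{T:HardyErgodic} itself: the assertion that for distinct positive non-integer exponents the multiple ergodic averages along the iterates $[n^{c_1}],\dots,[n^{c_\ell}]$ are jointly ergodic, i.e.\ converge in $L^2$ to the product of the conditional expectations onto the invariant factor. Proving this means exploiting both the (possibly sub- or super-linear) growth and the mutual independence of the functions $t\mapsto t^{c_j}$ — typically via Weyl-type equidistribution of $([n^{c_1}]\beta_1,\dots,[n^{c_\ell}]\beta_\ell)$ on a nilsystem, lifted through a structure theorem and a van der Corput induction — and it is the Hardy-field analogue of Theorem~\ref{T:PolyErgodic}. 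A minor but genuine bookkeeping point is that a single sequence ${\bf M}$ must be made to serve the correspondence principle, the short-interval input of part (i), and the uniform-density input of part (ii) simultaneously, which the diagonalisation in the first step accomplishes.
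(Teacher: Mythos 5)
Your proposal is correct and follows essentially the same route as the paper: correspondence principle along a sequence ${\bf M}$ admitting correlations, vanishing of the projection of the relevant $F_j$ onto the invariant (resp.\ Kronecker) factor via the short-interval/two-point input, and the Hardy-field multiple ergodic theorem (Theorem~\ref{T:HardyErgodic}); your product-system squaring in part (ii) is exactly how the paper itself deduces part (ii) of that theorem from part (i). Two small corrections: in part (i) the number-theoretic input is Theorem~\ref{T:RT} (hypothesis $M(f_j;N)\to\infty$), not Theorem~\ref{T:MRT} (which assumes strong aperiodicity), and in part (ii) your detour through Theorem~\ref{T:main} with $p_1(h)=h$ can be replaced by a direct appeal to Theorem~\ref{T:MRT}, which is what the paper's Proposition~\ref{P:key}(ii) does.
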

 \begin{remarks}
 $\bullet$ It is not hard to modify the proof  of part $(ii)$ in order to allow some of the  exponents
 $c_1,\ldots, c_\ell$ to be integers.

 $\bullet$ Using the  ergodic theorems \cite[Theorem~2.6]{Fr10} and \cite[Theorem~2.3]{Fr15} one can substitute the sequences $n^{c_1},\ldots, n^{c_\ell}$
 in the previous statement with collections of sequences having different growth rates  taken  from a much larger class of Hardy field sequences.
\end{remarks}


\subsection{Patterns in arithmetic sequences}
Results regarding sign patterns of arithmetic sequences and in particular of
the Liouville function $\lambda$ or the M\"obius  function $\mu$ have a rich history.
 For instance,
 it is known  that (consecutive) values of  $\lambda$
 attain all possible sign patterns with size two \cite{HPW85}  and size three
 \cite{Hi86}  infinitely often, and in fact with positive lower density \cite{MRT15b}. A similar result is known for $\mu$ but only for patterns of  size two  \cite{MRT15b}. On the other hand, it is not  known that four consecutive ones are taken
 by  $\lambda$
 infinitely often.
 A partial result  for patterns of longer size is that $\lambda$  takes at least
 $k+5$ of the possible $2^k$ sign patterns of size $k$
  with positive upper density \cite[Proposition~2.9]{MRT15b}. Of course,  Chowla's conjecture predicts that $\lambda$
   takes all possible sign patterns of size $k$  with natural density $2^{-k}$.

 On a different direction, it is known that
 $\lambda$ takes infinitely often all sixteen sign patterns along four
 term arithmetic progressions
 $m, m+n, m+2n, m+3n$ where $m\in \N$ and $n$ belongs to  a finite set \cite{BE11},
  and $\lambda$  takes all possible $2^{\ell+1}$ sign patterns along arithmetic
  progressions   $m, m+n,\ldots, m+\ell n$;  each pattern for a proportion of  $(m,n)\in [N]\times [N]$ that converges to $2^{-(\ell+1)}$ as  $N\to \infty$
 \cite[Proposition~9.1]{GT10}. Using \cite[Theorem~1.1]{FH16} we get similar results for any aperiodic multiplicative function with values in $\{-1,+1\}$.

Using Theorem~\ref{T:main} we get  consequences about sign patterns of $\lambda$ and $\mu$ of arbitrary size along
 polynomial progressions given by independent polynomials. To state our results it is convenient to introduce some notation.
Given a sequence of intervals ${\bf M}:=([M_k])_{k\in\N}$ with $M_k\to \infty$ we denote with $d_{\bf M}$  the natural density induced by the sequence of intervals ${\bf M}$, that is, for $\Lambda \subset \N$ we let
 $d_{\bf M}(\Lambda):=\lim_{k\to\infty} \frac{|\Lambda\cap [M_k]|}{M_k}$, assuming that the limit exists. Moreover, we define the upper density of a set $\Lambda \subset \N$ as
  $\overline{d}(\Lambda):=\limsup_{M\to\infty} \frac{|\Lambda\cap [M]|}{M}$.
\begin{theorem}\label{T:sign1}
Let  $p_1,\ldots, p_\ell \colon \N^r\to \Z$ be  independent polynomials and
 $f_0, \ldots, f_\ell\colon \Z\to \{-1,+1\}$ be strongly  aperiodic multiplicative functions.
Furthermore, for $\bn\in \N^r$ and $\epsilon_0,\ldots, \epsilon_\ell\in \{-1,+1\}$,
let ${\bf \epsilon}:=(\epsilon_0,\ldots, \epsilon_\ell)$ and
$$
\Lambda_{\bn,{\bf \epsilon}}:=\{m\in \N \colon f_0(m)=\epsilon_0, f_1(m+p_1(\bn))=\epsilon_1,\ldots, f_\ell(m+p_\ell(\bn))=\epsilon_\ell\}.$$
Then there exists a sequence of intervals ${\bf M}:=([M_k])_{k\in\N}$ with $M_k\to\infty$ such that 
$$
\UD_{\bn\to\infty}(d_{\bf M}(\Lambda_{\bn,{\bf \epsilon}}))=2^{-(\ell+1)}.
$$
\end{theorem}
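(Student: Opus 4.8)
The plan is to expand the indicator function of $\Lambda_{\bn,\epsilon}$ as a linear combination of multiplicative correlations and to apply Theorem~\ref{T:main} to each of them. Put $p_0:=0$ and $\epsilon:=(\epsilon_0,\dots,\epsilon_\ell)$. Since $f_j$ is $\{-1,+1\}$-valued, $\one_{\{f_j(x)=\epsilon_j\}}=\tfrac12\bigl(1+\epsilon_j f_j(x)\bigr)$, and therefore
$$
\one_{\Lambda_{\bn,\epsilon}}(m)=\prod_{j=0}^{\ell}\frac{1+\epsilon_j f_j(m+p_j(\bn))}{2}
=\frac{1}{2^{\ell+1}}\sum_{S\subseteq\{0,\dots,\ell\}}\Bigl(\prod_{j\in S}\epsilon_j\Bigr)\prod_{j\in S}f_j(m+p_j(\bn)).
$$
First I would fix, once and for all, a single sequence of intervals ${\bf M}=([M_k])_{k\in\N}$ with $M_k\to\infty$ along which all finite correlations of $f_0,\dots,f_\ell$ exist; by the first remark following Theorem~\ref{T:main} such a sequence exists, and it may moreover be chosen inside any prescribed increasing sequence. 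Averaging the identity above over $m\in{\bf M}$ shows that $d_{\bf M}(\Lambda_{\bn,\epsilon})$ exists for every $\bn$ and
$$
d_{\bf M}(\Lambda_{\bn,\epsilon})=\frac{1}{2^{\ell+1}}\sum_{S\subseteq\{0,\dots,\ell\}}\Bigl(\prod_{j\in S}\epsilon_j\Bigr)C_S(\bn),
\qquad
C_S(\bn):=\E_{m\in{\bf M}}\ \prod_{j\in S}f_j(m+p_j(\bn)).
$$
The empty set contributes precisely $2^{-(\ell+1)}$, so it remains to prove $\UD_{\bn\to\infty}C_S(\bn)=0$ for each nonempty $S$.

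For such an $S$ I would bring $C_S$ into the form handled by Theorem~\ref{T:main}. Let $j_0:=\min S$; substituting $m\mapsto m-p_{j_0}(\bn)$ in the inner average alters it only by a boundary error of size $O\bigl(|p_{j_0}(\bn)|/M_k\bigr)$, which tends to $0$ for each fixed $\bn$, so
$$
C_S(\bn)=\E_{m\in{\bf M}}\ f_{j_0}(m)\prod_{j\in S\setminus\{j_0\}}f_j(m+q_j(\bn)),\qquad q_j:=p_j-p_{j_0}.
$$
Since $\{1,p_1,\dots,p_\ell\}$ is linearly independent, so is $\{1\}\cup\{q_j:j\in S\setminus\{j_0\}\}$ (keeping $1$ and $p_{j_0}$ while replacing each $p_j$ by $p_j-p_{j_0}$ is an invertible change of basis, and discarding $p_{j_0}$ preserves independence); hence the polynomials $(q_j)_{j\in S\setminus\{j_0\}}$ are independent. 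When $|S|\ge2$, Theorem~\ref{T:main}, applied to $f_{j_0}$, the family $(f_j)_{j\in S\setminus\{j_0\}}$, the independent polynomials $(q_j)_{j\in S\setminus\{j_0\}}$, and our fixed sequence ${\bf M}$ (admissible by the remark quoted above, since all correlations exist along it), gives $\UD_{\bn\to\infty}C_S(\bn)=0$; here we use that each $f_j$, being strongly aperiodic, supplies the hypothesis of Theorem~\ref{T:main}. When $S=\{j_0\}$ is a singleton, $C_S(\bn)=\E_{m\in{\bf M}}f_{j_0}(m)$ is a constant that vanishes because strong aperiodicity implies aperiodicity.

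Finally, a finite linear combination of bounded sequences each converging to $0$ in uniform density again converges to $0$ in uniform density (immediate from condition (ii) of Definition~\ref{D:UD-lim} together with the triangle inequality). Since $d_{\bf M}(\Lambda_{\bn,\epsilon})-2^{-(\ell+1)}$ is exactly such a combination of the $C_S$ with $S\neq\emptyset$, we conclude $\UD_{\bn\to\infty}d_{\bf M}(\Lambda_{\bn,\epsilon})=2^{-(\ell+1)}$, as desired. All the analytic difficulty has already been absorbed into Theorem~\ref{T:main}; the only points requiring care here are organizational — arranging one sequence ${\bf M}$ that simultaneously witnesses Theorem~\ref{T:main} for each of the $2^{\ell+1}-1$ subfamilies, and checking that independence of the shift polynomials survives the translation $m\mapsto m-p_{j_0}(\bn)$ — so I do not anticipate a genuine obstacle.
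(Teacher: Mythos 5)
Your proof is correct and follows essentially the same route as the paper: write $\one_{\Lambda_{\bn,{\bf \epsilon}}}(m)=\prod_{j=0}^\ell\frac{1}{2}\big(1+\epsilon_j f_j(m+p_j(\bn))\big)$, expand into $2^{\ell+1}$ terms, and kill every non-constant term via Theorem~\ref{T:main} along a single sequence ${\bf M}$ chosen so that $f_0,\ldots,f_\ell$ admit correlations. The only (harmless) difference is bookkeeping: for the terms missing the $j=0$ factor the paper can simply invoke Theorem~\ref{T:main} with the constant function $1$ in the zero-shift slot (only one function need be strongly aperiodic), whereas your shift $m\mapsto m-p_{j_0}(\bn)$ together with the check that independence survives the translation is an equally valid, slightly more laborious way to reach the same reduction.
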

\begin{remark}
It follows that for every $\varepsilon>0$ we have $
\overline{d}(\Lambda_{\bn,{\bf \epsilon}})
\geq 2^{-(\ell+1)}-\varepsilon$ outside   a set of $\bn\in \N^r$ of uniform density $0$.
\end{remark}
Note that even   when $f_0=\cdots=f_\ell=\lambda$, $r=1$, and $p_j(n)=n^j$, $j=1,\ldots, \ell$,  it is non-trivial to establish that   the set
$\Lambda_{\bn,{\bf \epsilon}}$ is non-empty for all choices of sign patterns.

Given $a,b\in \N,$ with $[a]_b$ we denote the unique integer in $\{0,\ldots, b-1\}$ which is congruent to $a$ $\! \! \! \mod{b}$.
We denote with $\omega(n)$ the number of distinct prime factors of an integer $n$ and  with $\Omega(n)$ the number of prime factors of $n$ counted with  multiplicity.

\begin{theorem}\label{T:sign2}
Let  $p_1,\ldots, p_\ell \colon \N^r\to \Z$ be independent polynomials and
  $b_0,\ldots, b_\ell\in \N$. For  $\bn\in \N^r$ and $a_j\in \{0,..,b_j-1\}$, $j=0,\ldots, \ell$,
let ${\bf a}:=(a_0,\ldots, a_\ell)$ and
$$
\Lambda_{\bn,{\bf a}}:=\{m\in \N \colon [\omega(m)]_{b_0}=a_0 , [\omega(m+p_1(\bn))]_{b_1}=a_1,
\ldots, [\omega(m+p_\ell(\bn))]_{b_\ell}=a_\ell\}.$$
Then there exists a sequence of intervals ${\bf M}:=([M_k])_{k\in\N}$ with $M_k\to\infty$ such that
$$
\UD_{\bn\to\infty}(d_{\bf M}(\Lambda_{\bn,{\bf a}}))=\big(\prod_{j=0}^\ell b_j\big)^{-1}.
$$
\end{theorem}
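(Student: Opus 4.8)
The plan is to expand the indicator of $\Lambda_{\bn,{\bf a}}$ into multiplicative functions via discrete Fourier analysis on the groups $\Z/b_j\Z$, and then apply Theorem~\ref{T:main} to each resulting correlation sequence. Set $p_0:=0$, and for $b\in\N$ put $z_b:=\e^{2\pi i/b}$. For $j\in\{0,\ldots,\ell\}$ and $0\le t\le b_j-1$ define $g_{j,t}\colon\N\to\C$ by $g_{j,t}(n):=z_{b_j}^{\,t\,\omega(n)}$, extended to $\Z$ in the usual way. Since $\omega$ is additive on coprime arguments, each $g_{j,t}$ is multiplicative with values on the unit circle, hence $g_{j,t}\in\CM$; moreover $g_{j,0}=\one$, whereas for $t\not\equiv0\pmod{b_j}$ the function $g_{j,t}$ is strongly aperiodic by Corollary~\ref{C:aperiodic}. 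Orthogonality of the characters of $\Z/b_j\Z$ gives, for every $n\in\N$,
$$
\one_{[\omega(n)]_{b_j}=a_j}=\frac{1}{b_j}\sum_{t=0}^{b_j-1}z_{b_j}^{-t a_j}\,g_{j,t}(n),
$$
so that for each $\bn$ and all sufficiently large $m$ (the finitely many remaining $m$ being irrelevant for $d_{\bf M}$) one has
$$
\one_{\Lambda_{\bn,{\bf a}}}(m)=\prod_{j=0}^\ell\one_{[\omega(m+p_j(\bn))]_{b_j}=a_j}
=\frac{1}{\prod_{j=0}^\ell b_j}\sum_{{\bf t}}\Big(\prod_{j=0}^\ell z_{b_j}^{-t_j a_j}\Big)\prod_{j=0}^\ell g_{j,t_j}(m+p_j(\bn)),
$$
the sum running over all ${\bf t}=(t_0,\ldots,t_\ell)$ with $0\le t_j\le b_j-1$.

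Next I would fix the averaging sequence once and for all. Using the first remark following Theorem~\ref{T:main} together with a diagonal argument over the countably many pairs $(\bn,{\bf t})$, choose a sequence of intervals ${\bf M}=([M_k])_{k\in\N}$ with $M_k\to\infty$ along which each of the finitely many multiplicative functions $g_{j,t}$ admits correlations (Definition~\ref{D:correlations}); then for every $\bn$ and every ${\bf t}$ the limit $C_{\bf t}(\bn):=\E_{m\in{\bf M}}\prod_{j=0}^\ell g_{j,t_j}(m+p_j(\bn))$ exists, and averaging the displayed identity over $m\in{\bf M}$ yields
$$
d_{\bf M}(\Lambda_{\bn,{\bf a}})=\frac{1}{\prod_{j=0}^\ell b_j}\sum_{{\bf t}}\Big(\prod_{j=0}^\ell z_{b_j}^{-t_j a_j}\Big)C_{\bf t}(\bn).
$$
For ${\bf t}=(0,\ldots,0)$ we have $C_{\bf t}\equiv1$, contributing the constant $\big(\prod_{j=0}^\ell b_j\big)^{-1}$. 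For ${\bf t}\neq(0,\ldots,0)$ some coordinate $t_{j_0}$ is nonzero, so $g_{j_0,t_{j_0}}$ is strongly aperiodic; since $p_1,\ldots,p_\ell$ are independent, Theorem~\ref{T:main} applied to the functions $g_{0,t_0},\ldots,g_{\ell,t_\ell}$ and the sequence ${\bf M}$ (admissible by the same remark) gives $\UD_{\bn\to\infty}C_{\bf t}(\bn)=0$.

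Finally, since there are only finitely many tuples ${\bf t}$ and convergence in uniform density is stable under finite linear combinations (criterion (ii) of Definition~\ref{D:UD-lim}), I would conclude
$$
\UD_{\bn\to\infty}\big(d_{\bf M}(\Lambda_{\bn,{\bf a}})\big)=\frac{1}{\prod_{j=0}^\ell b_j}\cdot1+\sum_{{\bf t}\neq(0,\ldots,0)}\Big(\prod_{j=0}^\ell z_{b_j}^{-t_j a_j}\Big)\cdot0=\Big(\prod_{j=0}^\ell b_j\Big)^{-1},
$$
as desired.

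The only genuinely arithmetic input beyond Theorem~\ref{T:main} is the strong aperiodicity of $n\mapsto z_b^{\,t\,\omega(n)}$ for $t\not\equiv0\pmod b$, which is exactly what Corollary~\ref{C:aperiodic} provides; accordingly, the delicate point is not in the reduction above but lies upstream, in establishing that corollary (i.e.\ in controlling how well such functions can pretend to be $\chi(n)n^{it}$). Within the present argument the one bookkeeping subtlety is to select a single sequence ${\bf M}$ that simultaneously serves all finitely many correlations $C_{\bf t}$ and all $\bn$, which the remarks following Theorem~\ref{T:main} permit.
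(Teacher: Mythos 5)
Your proposal is correct and takes essentially the same route as the paper: your functions $g_{j,t}(n)=z_{b_j}^{t\,\omega(n)}$ are exactly the paper's $f_{b_j}^r$, and like the paper you expand $\one_{\Lambda_{\bn,{\bf a}}}$ via orthogonality of characters of $\Z/b_j\Z$, obtain strong aperiodicity of the non-trivial terms from Corollary~\ref{C:aperiodic}, and apply Theorem~\ref{T:main} term by term along one sequence ${\bf M}$ chosen so that the finite family of multiplicative functions admits correlations. No gaps; the bookkeeping point about a single ${\bf M}$ is handled the same way in the paper.
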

\begin{remark}
It follows that  for every $\varepsilon>0$ we have $
\overline{d}(\Lambda_{\bn,{\bf a}})\geq(\prod_{j=0}^\ell b_j)^{-1}-\varepsilon$ outside   a set of $\bn\in \N^r$ of uniform density $0$.
\end{remark}


With minor modifications
one can prove variants of the previous  statement where
in the definition of the set
$\Lambda_{\bn,{\bf a}}$
one uses the arithmetic function
 $\Omega$ in the place of $\omega$ or uses
$\omega$  and $\Omega$ in different places.

Moreover,  using part $(ii)$ of Theorem~\ref{T:Hardy} in place of Theorem~\ref{T:main}, one can get variants of the
previous results (with essentially the same proof) where the independent polynomials are
 substituted by sequences given by the  integer part of
different fractional powers and the limit in uniform density is substituted by
 the  limit in natural density, defined by $\CD_{n\to\infty}(a(n))=c$ if $\lim_{N\to\infty}\E_{n\in [N]}|a(n)-c|=0$.

\subsection{Further remarks and open problems}\label{SS:further}
It is natural to ask for   extensions of  Theorem~\ref{T:main} which  cover families of polynomials that are   dependent and have pairwise non-constant differences. In this direction, even showing that
$$
\lim_{N\to\infty} \E_{n\in [N]} \big( \E_{m\in {\bf M}}\,  \lambda(m) \, \lambda(m+n)\,\lambda(m+2n)\big)=0
$$
for some sequence of intervals ${\bf M}:=([M_k])_{k\in\N}$ with $M_k\to\infty$
remains a challenge. In order to have access to  ergodic methodology
one needs to prove  a variant of Proposition~\ref{P:key} below with an assumption on $\lambda$
that enables to deduce orthogonality of the function $F=F_\lambda$ to
a factor (the $\mathcal{Z}_1$-factor)
which is in general    larger than the Kronecker factor of the corresponding system.
The number theoretic input on $\lambda$  needed  to establish such claim is the following asymptotic
\begin{equation}\label{E:conj}
\lim_{N\to \infty} \limsup_{M\to \infty}\E_{m\in [M]} \sup_{t\in \R}| \E_{n\in [N]} \lambda(m+n) \cdot e(nt)|=0.
\end{equation}
This is not known yet, and the reader can consult \cite{Tao15,Tao16} for further consequences in case \eqref{E:conj} holds (it implies the logarithmically averaged Chowla conjecture for three point correlations). Extending  Theorem~\ref{T:main} to general families of polynomials with pairwise non-constant differences would require a higher order variant of \eqref{E:conj} with nilsequences of bounded complexity used in  place of the linear exponential sequences.

In a rather different direction, one can study correlation sequences by taking averages simultaneously on the parameters $m$ and $\bn$. For instance,
 it was shown in \cite[Proposition~9.1]{GT10}, modulo conjectures which  were subsequently verified in \cite{GT12b, GTZ12c}, that
$$
\lim_{N\to\infty}\E_{\bn\in [N]^r} \E_{m\in [N]} \, f(m)\, f(m+p_1(\bn)) \cdots f(m+p_\ell(\bn))=0,
$$
where $f$ is the M\"obius or the Liouville function  and $p_1,\ldots, p_\ell$ are linear polynomials with   pairwise non-constant differences.  In \cite{FH16, FH14} this is extended to the case where $f$ is an arbitrary aperiodic multiplicative function.

 Moreover, the concatenation results from \cite{TZ16a, TZ16b}, that allow to control local Gowers norms by global ones, can be used to show that
\begin{equation}\label{E:polynomial}
\lim_{N\to\infty} \E_{\bn\in [L_N]^r} \E_{m\in [N]}\,  f(m)\, f(m+p_1(\bn)) \cdots f(m+p_\ell(\bn))=0,
\end{equation}
where $f$ is the M\"obius or the Liouville function, $L_N$ is of the order  $N^{1/d}$ where $d$ is the maximum degree of the polynomials $p_i$,
and the polynomials $p_i$ have the property that $p_i-p_j$ have degree exactly $d$ for all $i\neq j$ (some other cases were handled as well, see \cite[Theorem~1.6]{TZ16b}). If one combines this with
the fact that aperiodic multiplicative functions are Gowers uniform of arbitrary order \cite[Theorem~2.5]{FH14}, it
allows to extend this result to the class of all aperiodic multiplicative functions.

Extending \eqref{E:main0} or \eqref{E:polynomial}   to general families of polynomials with pairwise non-constant differences, even when $f$ is the Liouville function and $\ell=2, r=1$,  remains a challenge.

\subsection{Notation and conventions}
 For reader's convenience, we gather here some notation that we use throughout the article.
  We denote by $\N$ the set of positive integers and by $\P$  the set of prime numbers.
 For  $N\in\N$  we let  $[N]:=\{1,\dots,N\}$. If $A$ is a finite non-empty set we let $\E_{a\in A}:=\frac{1}{|A|}\sum_{a\in A}$.
 We let  $e(t):=e^{2\pi it}$.
We use the letter $f$ to denote  multiplicative functions and the letter $\chi$ to denote   Dirichlet characters.

\section{Key ingredients from number theory and ergodic theory}
In this section we gather the key ingredients from number theory and ergodic theory needed in the proof
of our main results.
\subsection{Averages over typical short intervals}
The next result from  \cite[Theorem A.1]{MRT15} is the key ingredient  in the proof of Theorem~\ref{T:MRT} below,
which in turn is needed in the proof of Theorem~\ref{T:main}.
\begin{theorem}\label{T:RT}
Let $f\in \CM$ be a  multiplicative function such that $M(f;N)\to \infty$ as $N\to \infty$.
Then
$$
\lim_{N\to \infty} \limsup_{M\to \infty}\E_{m\in [M]} |\E_{n\in [N]} f(m+n)|=0.
$$
\end{theorem}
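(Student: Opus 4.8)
The plan is to deduce this from the Matomäki--Radziwi{\l}{\l} theorem on mean values of bounded multiplicative functions over typical short intervals \cite{MR15}, combined with Hal\'asz's theorem \cite{Hal68}. Note first that the hypothesis $M(f;N)\to\infty$ is essentially sharp: for $f=1$ (or $f(n)=n^{it}$) the inner average $\E_{n\in[N]}f(m+n)$ is bounded away from $0$ for all $m$, so some non-pretentiousness assumption is unavoidable, and the pretentious distance $\D$ is exactly the right gauge for it.

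I would first record the \cite{MR15} input in a usable form: there is an absolute function $\psi$ with $\psi(N)\to0$ as $N\to\infty$ (of the shape $(\log\log N/\log N)^{c}$, independent of $f$) and a decreasing function $\varphi$ with $\varphi(x)\to0$ as $x\to\infty$, such that for every $f\in\CM$ and all $M$ large enough in terms of $N$,
$$
\E_{m\in[M]}\bigl|\E_{n\in[N]}f(m+n)\bigr|\le \psi(N)+\varphi\bigl(M(f;M)\bigr).
$$
Here $\psi(N)$ is the genuine short-interval error supplied by \cite{MR15}, while $\varphi(M(f;M))$ records the pretentious/resonant contribution: the short average is, off an exceptional set of $m$ of density $o_{N\to\infty}(1)$, controlled by the means of the twists $f\cdot n^{-it}$ along the long interval $[M]$, and by Hal\'asz's theorem these are all $\ll\varphi(M(f;M))$ once $M(f;M)$ is large, since a large value of $M(f;M)$ forces $f$ to be far from every $n^{it}$ with $|t|\le M$. (If the cleanest off-the-shelf form of the \cite{MR15} estimate is stated in $L^2$, one first applies Cauchy--Schwarz, $\E_{m\in[M]}|\E_{n\in[N]}f(m+n)|\le(\E_{m\in[M]}|\E_{n\in[N]}f(m+n)|^2)^{1/2}$, and argues with the square throughout.)

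Granting the displayed bound, the conclusion follows by a two-step limiting argument. Fix $\varepsilon>0$ and choose $N_0$ with $\psi(N)<\varepsilon$ for all $N\ge N_0$. For such $N$, since $M(f;M)\to\infty$ as $M\to\infty$ (this is precisely the hypothesis, with the variable renamed) and $\varphi$ decreases to $0$, letting $M\to\infty$ in the displayed bound gives
$$
\limsup_{M\to\infty}\E_{m\in[M]}\bigl|\E_{n\in[N]}f(m+n)\bigr|\le \psi(N)<\varepsilon .
$$
As $\varepsilon>0$ and $N\ge N_0(\varepsilon)$ were arbitrary, $\lim_{N\to\infty}\limsup_{M\to\infty}\E_{m\in[M]}|\E_{n\in[N]}f(m+n)|=0$.

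The main obstacle is the Matom\"aki--Radziwi{\l}{\l} theorem itself, which I would invoke as a black box; its proof (a delicate analysis of Dirichlet polynomials over typical short intervals, together with a combinatorial identity reducing to the large-prime factor of $f$) is the deep part, and everything else here is bookkeeping with Hal\'asz's inequality and the order of limits. The one genuine point of care in the reduction is the range of the frequency $t$ over which the pretentious main term resonates: one must check it is contained in $\{|t|\le M\}$ (or a smaller range), so that $M(f;M)=\min_{|t|\le M}\D(f,n^{it};M)^2\to\infty$ really does annihilate the main term; since shrinking the range of $t$ only enlarges the minimum, any range $\subseteq\{|t|\le M\}$ is harmless.
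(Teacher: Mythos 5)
The paper itself does not prove this statement: Theorem~\ref{T:RT} is imported verbatim from \cite[Theorem A.1]{MRT15}, the only comment being that the statement there is for dyadic averages $\E_{M\le m<2M}$, which is equivalent to the form over $[M]$ (sum over dyadic blocks). Your proposal is the same in spirit --- record a quantitative short-interval bound of the shape $\psi(N)+\varphi(M(f;M))$ and then take limits --- and your limiting bookkeeping is correct: for fixed $N$, the hypothesis $M(f;M)\to\infty$ kills the $\varphi$ term, and then $\psi(N)\to 0$. The weak point is the provenance of the quantitative input. The main theorems of \cite{MR15} concern real-valued multiplicative functions and contain no $M(f;\cdot)$ term; the bound you record, valid for arbitrary $f\in\CM$ with the pretentious term $\varphi(M(f;M))$, is precisely \cite[Theorem A.1]{MRT15}, and it is not obtained by the routine combination ``\cite{MR15} as a black box plus Hal\'asz'' that you sketch. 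Hal\'asz's theorem controls the \emph{long} averages of the twists $f(n)n^{-it}$, but to show that the short averages are, off an exceptional set of $m$, governed by those long averages for complex-valued $f$ one has to rerun the Matom\"aki--Radziwi{\l}{\l} Dirichlet-polynomial analysis for the twists themselves; that adaptation is the actual content of the appendix of \cite{MRT15}. So either cite \cite[Theorem A.1]{MRT15} directly --- in which case your argument coincides with the paper's --- or be prepared to reprove that appendix; as written, the parenthetical derivation from \cite{MR15} is the one genuine gap, although the input you need is true and available in the literature. (Also make the dyadic-to-full-interval reduction explicit, as the paper's remark does.)
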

\begin{remarks}
The statement in  \cite[Theorem A.1]{MRT15} involves averages of the form $\E_{M\leq m<2M}$, but the two statements are equivalent.
\end{remarks}

\subsection{An orthogonality criterion}
 We will need the following qualitative variant of an  orthogonality criterion
 of  K\'atai:
\begin{theorem}[\cite{K86}]
\label{T:katai}
Let $a\in \ell^\infty(\N)$ be a sequence such that
$$
\lim_{N\to\infty} \E_{n\in [N]} \, a(pn)\cdot \overline{a(qn)}=0
$$
for all primes $p,q$ with $p\neq q$. Then for every multiplicative function $f\in \CM$
we have
$$
\lim_{N\to\infty}\E_{n\in[N]}\,  f (n)\cdot a(n)=0.
$$
\end{theorem}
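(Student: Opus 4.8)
The plan is to prove this by the classical argument underlying Kátai's lemma, based on a double application of Cauchy--Schwarz together with the Turán--Kubilius inequality. By rescaling $a$ we may assume $\|a\|_\infty\le 1$; recall also that $|f|\le 1$.

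Fix a large parameter $Q$, write $\sum_{p\le Q}$ for a sum over primes $p\le Q$, and set $L=L(Q):=\sum_{p\le Q}\frac1p$, so that $L(Q)\to\infty$ as $Q\to\infty$. Starting from the identity
$$
L\sum_{n\le N}f(n)a(n)=\sum_{n\le N}f(n)a(n)\Big(L-\#\{p\le Q:\, p\mid n\}\Big)+\sum_{p\le Q}\ \sum_{\substack{n\le N\\ p\mid n}}f(n)a(n),
$$
I would first bound the initial sum on the right by Cauchy--Schwarz and the Turán--Kubilius inequality (for fixed $Q$ a direct variance computation gives $\sum_{n\le N}|L-\#\{p\le Q: p\mid n\}|^2\ll NL+O_Q(1)$), obtaining a bound $O(N\sqrt L)+o_Q(N)$. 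For the second sum I would substitute $n=pm$, use that $f(pm)=f(p)f(m)$ when $(p,m)=1$, and discard the $m$ divisible by $p$ at a cost of $O\big(\sum_{p\le Q}N/p^2\big)=O(N)$, reducing it to
$$
T:=\sum_{p\le Q}f(p)\sum_{m\le N/p}f(m)\,a(pm).
$$

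The key step is to estimate $T$. Interchanging the order of summation and applying Cauchy--Schwarz in $m$,
$$
|T|^2\le N\sum_{m\le N}\Big|\sum_{\substack{p\le Q\\ pm\le N}}f(p)\,a(pm)\Big|^2=N\sum_{p,q\le Q}f(p)\overline{f(q)}\sum_{m\le N/\max(p,q)}a(pm)\,\overline{a(qm)}.
$$
The diagonal terms $p=q$ contribute at most $N\sum_{p\le Q}(N/p)=N^2L$. For each of the finitely many off-diagonal pairs $p\neq q$ with $p,q\le Q$, the hypothesis $\E_{m\in[M]}a(pm)\overline{a(qm)}\to0$ gives $\sum_{m\le M}a(pm)\overline{a(qm)}=o(M)$ as $M\to\infty$, so the off-diagonal contribution is $o_Q(N^2)$ as $N\to\infty$. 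Hence $|T|\le N\sqrt L+o_Q(N)$. Combining the three estimates and dividing by $NL$ yields $\limsup_{N\to\infty}|\E_{n\in[N]}f(n)a(n)|\ll 1/\sqrt{L(Q)}$, and letting $Q\to\infty$ completes the proof.

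The only delicate point is that $Q$ must be kept fixed while $N\to\infty$ throughout the Cauchy--Schwarz estimates, so that the number of off-diagonal pairs $(p,q)$ stays bounded and their total contribution is genuinely controlled by the hypothesized limit; the limit $Q\to\infty$ is taken only at the very end. Beyond this bookkeeping I do not expect any serious obstacle: the Turán--Kubilius-type variance bound and the handling of the non-squarefree terms are routine.
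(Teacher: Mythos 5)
Your proof is correct; note that the paper does not prove Theorem~\ref{T:katai} at all but quotes it from K\'atai's article, and your argument (the Tur\'an--Kubilius variance bound for $\#\{p\le Q:\ p\mid n\}$ plus the bilinear Cauchy--Schwarz step in which the hypothesis on $\E_{n\in[N]}\,a(pn)\overline{a(qn)}$ is applied only to the finitely many off-diagonal pairs $p\neq q\le Q$) is exactly the classical proof of that cited criterion. Your handling of the order of limits, first $N\to\infty$ with $Q$ fixed and only then $Q\to\infty$, is the one delicate point and you treat it correctly, so there is nothing to add.
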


\subsection{Ergodic theory, invariant and Kronecker factors}\label{SS:ergodic}
   A \emph{measure preserving system}, or simply \emph{a system}, is a quadruple  $(X,\CX,\mu, T)$, where
 $(X,\CX,\mu)$ is  a probability space
  and the transformation  $T\colon X\to X$ is  invertible,  measurable, and measure preserving (that is, $\mu(T^{-1}A)=\mu(A)$ for every $A\in \CX$).

 Throughout,  we denote  with $T^n$, $n\in \N$,  the composition $T\circ  \cdots \circ T$, and for $F\in L^\infty(\mu)$ we denote with $TF$  the function $F\circ T$.

The \emph{invariant factor}  of a  system is defined to be the linear subspace of $L^2(\mu)$ consisting of all $T$-invariant functions, that is, functions satisfying $Tf=f$.
The  \emph{Kronecker factor}  
 of a system is defined to be the closed linear subspace of $L^2(\mu)$ spanned by all $T$-eigenfunctions, that is,
functions satisfying $Tf=e(\alpha) \, f$ for some  $\alpha \in \R$. We will use the following two facts about these factors:

$\bullet$  A function $F\in L^2(\mu)$ is orthogonal to the invariant factor
if and only if
$$
\lim_{N\to \infty} \E_{n\in [N]}\int   T^nF\cdot  \overline{F} \, d\mu=0.
$$
This is a direct consequence of the  mean ergodic theorem.

$\bullet$
The function   $F\in L^2(\mu)$ is orthogonal to the Kronecker  factor
if and only if its  spectral measure  is continuous, which, by Wiener's theorem, happens exactly when
$$
\lim_{N\to \infty}\E_{n\in [N]} \Big|\int   T^nF\cdot  \overline{F} \, d\mu\Big|=0.
$$

\subsection{Ergodic transference principle}\label{SS:transference}
We give a result that allows  to recast convergent  correlation sequences
as ergodic correlation sequences.
\begin{definition}\label{D:correlations}
We say that the   sequences $a_1,\ldots, a_\ell\in \ell^\infty(\Z)$  {\em admit correlations along the sequence of intervals ${\bf M}:=([M_k])_{k\in\N}$} with $M_k\to \infty$, if the limit
 $$
 \lim_{k\to\infty} \E_{m\in [M_k]}\,  b_1(m+n_1)\cdots b_s(m+n_s)
$$
exists for every $s\in \N$,  all integers $n_1,\ldots, n_s$ (not necessarily distinct), and all sequences
 $b_1,\ldots, b_s$  that
belong to the set $\{a_1,\ldots, a_\ell, \bar{a}_1,\ldots, \bar{a}_\ell\}$.\footnote{Then the limit $
 \lim_{k\to\infty} \E_{m\in [M_k]}\,  F(b_1(m+n_1),\ldots, b_s(m+n_s))$ exists for every $s\in \N$, $n_1,\ldots, n_s\in \Z$, continuous function $F\colon D^s\to \C$ where
 $D$ is the closed complex disc with radius $\max_{j=1,\ldots, \ell}\norm{a_j}_\infty$, and $b_j\in \{a_1,\ldots, a_\ell\}$ for $j=1,\ldots, s$.
 This follows from the Stone-Weierstrass theorem. }
\end{definition}
\begin{remark}
  If $a_1,\ldots, a_\ell\in \ell^\infty(\Z)$, then using a diagonal argument we get that any sequence $(M_k)_{k\in \N}$ of integers with $M_k\to\infty$
has a subsequence $(M_k')_{k\in\N}$, such that the sequences  $a_1,\ldots, a_\ell$ admit correlations along the  intervals $([M'_k])_{k\in\N}$.
\end{remark}

We use the   following transference principle
which can be thought of as a variant of Furstenberg's correspondence principle for sequences:
\begin{proposition}\label{P:correspondence}
 Suppose that the sequences
 $a_0,\ldots, a_\ell\in \ell^\infty(\Z)$  admit
correlations along
the sequence of intervals ${\bf M}:=([M_k])_{k\in\N}$ with $M_k\to\infty$.
Then there exist a  system $(X,\mathcal{X},\mu, T)$ and
functions $F_0,\ldots,F_\ell\in L^\infty(\mu)$, such that
 $$
\E_{m\in {\bf M} }\, b_0(m)\,   b_1(m+n_1)\cdots b_s(m+n_s)= \int   \tilde{F}_0 \cdot T^{n_1}
\tilde{F}_1\cdots T^{n_s}
\tilde{F}_s\, d\mu
$$
for every $s\in \N$,  $n_1, \ldots, n_s\in \Z$, 
where for $j=0,\ldots, s$ the sequence
$b_j$ is equal to either  $a_i$ or $\overline{a_i}$ for some $i\in \{0,\ldots,\ell\}$
and $\tilde{F}_j$ is equal to $F_i$ or $\overline{F_i}$ respectively.
\end{proposition}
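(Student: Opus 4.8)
The plan is to imitate Furstenberg's correspondence principle, but working on the integer shift space rather than on an abstract orbit closure, so that the hypothesis "admits correlations along ${\bf M}$" directly supplies the invariant mean needed to build the measure. First I would form the compact metric space $\Omega := D^{\Z \times \{0,1,\ldots,\ell\}}$, where $D$ is the closed disc of radius $\max_j \norm{a_j}_\infty$, with the product topology and the shift $S\colon \Omega \to \Omega$ acting by $(S\omega)(n,j) = \omega(n+1,j)$. I package the $\ell+1$ sequences into a single point $x_0 \in \Omega$ by $x_0(n,j) := a_j(n)$, and I let $\pi_j\colon \Omega \to D$ be the coordinate projection $\pi_j(\omega) := \omega(0,j)$, so that $\pi_j(S^n x_0) = a_j(n)$. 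These $\pi_j$ are continuous, hence will become the desired $F_j \in L^\infty(\mu)$ once the measure is in place.

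Next I would construct the measure. Define a sequence of Borel probability measures on $\Omega$ by $\mu_k := \E_{m\in [M_k]} \delta_{S^m x_0}$. By weak-$*$ compactness of the space of probability measures on the compact metrizable $\Omega$, pass to a subsequence along which $\mu_k \to \mu$ weakly-$*$; in fact the "admits correlations" hypothesis is exactly what guarantees that the full sequence $\mu_k$ already converges, because linear combinations of functions of the form $\omega \mapsto \prod_{i} \pi_{j_i}(S^{n_i}\omega)^{\epsilon_i}$ (with $\epsilon_i \in \{1, \text{conjugate}\}$) are dense in $C(\Omega)$ by Stone--Weierstrass, and $\int \prod_i \pi_{j_i}(S^{n_i}\omega)^{\epsilon_i}\, d\mu_k = \E_{m\in[M_k]} \prod_i b_i(m+n_i)$ converges by hypothesis. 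The limit $\mu$ is $S$-invariant because $\mu_k - S_*\mu_k = \E_{m\in[M_k]}(\delta_{S^m x_0} - \delta_{S^{m+1}x_0})$ is supported on two points with total variation $O(1/M_k) \to 0$. Then set $X := \Omega$, $\mathcal{X}$ the Borel $\sigma$-algebra, $T := S$, and $F_j := \pi_j$. Tracing through the definitions, for any $s$, any $n_1,\ldots,n_s \in \Z$, and any choice of $b_j \in \{a_i, \overline{a_i}\}$ with corresponding $\tilde F_j \in \{F_i, \overline{F_i}\}$, we get
$$
\int \tilde F_0 \cdot T^{n_1}\tilde F_1 \cdots T^{n_s}\tilde F_s \, d\mu = \lim_{k\to\infty}\E_{m\in[M_k]} b_0(m)\, b_1(m+n_1)\cdots b_s(m+n_s) = \E_{m\in {\bf M}}\, b_0(m)\, b_1(m+n_1)\cdots b_s(m+n_s),
$$
which is the claimed identity (after absorbing the shift on $\tilde F_0$: note the statement places no shift on the first factor, matching the $n_0 = 0$ convention).

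One technical point I would be careful about, rather than a genuine obstacle: $S$ as defined is a one-sided shift only if the index set were $\N$; here the index set is $\Z$, so $S$ is a homeomorphism of $\Omega$ and is automatically invertible, meeting the requirement in the definition of "a system" that $T$ be invertible. I should also check that the limit $\E_{m\in {\bf M}}$ in the displayed conclusion makes sense for the precise products appearing — but this is immediate since each $b_j(m+n_j)$ is one of $a_i(m+n_j)$ or $\overline{a_i(m+n_j)}$, and the "admits correlations" hypothesis was stated to cover exactly products of elements of $\{a_1,\ldots,a_\ell,\overline{a_1},\ldots,\overline{a_\ell}\}$ with arbitrary (not necessarily distinct) integer shifts. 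The only mild subtlety is that $a_0$ must also be among the sequences whose correlations are controlled; I would simply state the proposition for $a_0,\ldots,a_\ell$ and apply Definition~\ref{D:correlations} with this full list, as is done in the statement. No step here requires any hard analysis: the entire content is Stone--Weierstrass density plus weak-$*$ compactness plus the telescoping estimate for invariance, so there is no real "hard part" — the proposition is a soft transference statement and the work is entirely in setting up notation consistently.
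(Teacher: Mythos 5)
Your construction (product shift space, point built from the sequences, weak-$*$ limit of the empirical measures $\E_{m\in[M_k]}\delta_{S^m x_0}$, coordinate projections as the $F_j$, Stone--Weierstrass to identify the limit with the correlation hypothesis) is exactly the paper's argument, up to the cosmetic identification $D^{\Z\times\{0,\ldots,\ell\}}\cong (D^{\Z})^{\ell+1}$. The proof is correct; your explicit telescoping check of $T$-invariance is a detail the paper leaves implicit.
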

\begin{proof}
We let
 $D$ be the closed complex disc with radius $\max_{j=0,\ldots, \ell}\norm{a_j}_\infty$, $Y:=D^{\Z}$, and $X:=Y^{\ell+1}$.
  We equip $Y$ and $X$ with the product topology and let  $\CX$  be the Borel $\sigma$-algebra
of $X$. We let $T_{sh}\colon Y\to Y$ and $T\colon X\to X$ be defined by  $(T_{sh}x)(m):=x(m+1),$ $m\in \Z$,
and
$$
T(x_0, \ldots, x_\ell):=(T_{sh}x_0 \ldots, T_{sh}x_\ell)
$$
where $x_0,\ldots, x_\ell\in Y$.
Also, for $j=0,\ldots, \ell$ we define the functions $F_j\in C(X)$ by
$$F_j(x_0,\ldots, x_\ell):=x_j(0).$$
  Finally, we let
   $$\omega:=(a_0,\ldots, a_\ell)\in X
   $$
  and let
$\mu$  be  the  weak$^*$ limit (which exists by our assumptions)  of the sequence of probability measures
$$
\mu_k:=\E_{m\in [M_k]}\delta_{T^m\omega},\quad k\in \N.
$$

Note $T\in C(X)$,   the measure $\mu$ is $T$-invariant,  and  for $j=0,\ldots, \ell,$  we have
$$
F_j(T^n\omega)=a_j(n), \quad n\in \Z.
$$ Hence,
assuming that  $b_j$ and $\tilde{F}_j$, $j=0,\ldots, \ell$,  are as in the statement of the result,  letting $n_0=0$, we have   for all   $n_1,\dots,n_s\in\Z$ that
 $$
\E_{m\in {\bf M} }\prod_{j=0}^s
b_j(m+n_j)=
\E_{m\in {\bf M} }\prod_{j=0}^s
T^{n_j}\tilde{F}_j(T^m\omega)
=\int \prod_{j=0}^s T^{n_j}\tilde{F}_j\,d\mu.
$$
This completes the proof.
\end{proof}

\subsection{Two multiple ergodic theorems}
We will use the following result from ergodic theory:
\begin{theorem}[\cite{FK06} for $r=1$]\label{T:PolyErgodic}
Let $p_1,\ldots, p_\ell \colon \N^r\to \Z$ be  independent polynomials, $(X,\CX, \mu,T)$ be a system, and $F_0, \ldots, F_\ell\in L^\infty(\mu)$ be functions at least one of which  is orthogonal to the Kronecker factor of the system. Then
$$
\UD_{\bn\to\infty}
\Big(\int F_0\cdot T^{p_1(\bn)}F_1 \cdots T^{p_\ell(\bn)}F_\ell\, d\mu\Big)=0.
$$
\end{theorem}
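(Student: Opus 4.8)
The plan is to reduce the multivariate polynomial statement to the single-variable case $r=1$ already known from \cite{FK06}, and then to use that case to control the general averages. First I would fix the system $(X,\CX,\mu,T)$ and functions $F_0,\dots,F_\ell$, and assume without loss of generality that $F_{i_0}$ is orthogonal to the Kronecker factor for some $i_0$; by the characterization recalled in Section~\ref{SS:ergodic}, this means the spectral measure of $F_{i_0}$ is continuous, i.e.\ $\lim_{N\to\infty}\E_{n\in[N]}|\int T^nF_{i_0}\cdot\overline{F_{i_0}}\,d\mu|=0$. Write $a(\bn):=\int F_0\cdot T^{p_1(\bn)}F_1\cdots T^{p_\ell(\bn)}F_\ell\,d\mu$, a bounded sequence on $\N^r$. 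I want to show $a$ converges to $0$ in uniform density, which by Definition~\ref{D:UD-lim}(ii) means $\E_{\bn\in I_N}|a(\bn)|\to 0$ for every sequence of parallelepipeds $I_N\subset\N^r$ with all side lengths $\to\infty$.

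The key reduction is to parametrize a generic rank-one direction. Because $\{1,p_1,\dots,p_\ell\}$ is linearly independent, for a ``generic'' choice of base point $\bh$ and direction $\bv$ (in fact for all but finitely many $\bv$ in any large box), the single-variable polynomials $t\mapsto p_1(\bh+t\bv),\dots,p_\ell(\bh+t\bv)$ together with $1$ are again linearly independent over $\Q$: the leading-coefficient obstruction is a nonzero polynomial condition on $\bv$ coming from the top-degree parts. For such $(\bh,\bv)$, Theorem~\ref{T:PolyErgodic} in the already-established case $r=1$ gives $\UD_{t\to\infty}\big(\int F_0\cdot T^{p_1(\bh+t\bv)}F_1\cdots T^{p_\ell(\bh+t\bv)}F_\ell\,d\mu\big)=0$, i.e.\ $\lim_{L\to\infty}\E_{t\in[L]}|a(\bh+t\bv)|=0$. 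The plan is then to foliate each parallelepiped $I_N$ into parallel one-dimensional arithmetic progressions along a fixed good direction (e.g.\ a coordinate direction, which is always admissible after possibly passing to a sublattice, or a fixed vector $\bv$ that works simultaneously), apply the $r=1$ result on each fiber, and average over fibers using the bounded convergence theorem; the uniformity of the $r=1$ conclusion (it holds along \emph{every} sequence of intervals of growing length, not just $[L]$) is exactly what lets the fiberwise limits assemble into $\E_{\bn\in I_N}|a(\bn)|\to 0$ as $\min_j|I_{N,j}|\to\infty$.

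More carefully, I would argue as follows. One shows the one-variable van der Corput--type inequality or simply invokes the structure of the argument in \cite{FK06}: the quantity $\limsup_{N}\E_{\bn\in I_N}|a(\bn)|$ is controlled, via Cauchy--Schwarz and a PET-induction on the polynomials $p_j$ (polynomial exhaustion technique), by a ``box seminorm'' of $F_{i_0}$ taken over a group generated by finitely many powers $T^{q(\bn)}$ with $q$ running over differences of the $p_j$ and their van der Corput shifts; independence guarantees these $q$ are nonconstant, so the relevant seminorm is bounded by the Kronecker-factor projection of $F_{i_0}$, which vanishes. Rather than redo PET in $r$ variables, the cleaner route is the reduction above: the $r$-variable PET reduces, by restricting to a generic line, to the $1$-variable PET that \cite{FK06} already carries out, and the uniform-density (rather than Cesàro) conclusion there is robust enough to push forward through the fibration. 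The main obstacle I expect is precisely this bookkeeping: making sure that (a) one can choose a single direction $\bv$ along which the restricted polynomials stay independent \emph{and} along which every parallelepiped $I_N$ can be sliced into long progressions with negligible boundary, and (b) the convergence in $\UD$ along each fiber is sufficiently uniform in the fiber (uniform in $\bh$) that averaging over the transverse directions preserves the limit $0$; handling the non-coordinate or ``skew'' parallelepipeds and the finitely many bad directions where independence degenerates is the fiddly point, but since bad directions form a lower-dimensional set and contribute $o(|I_N|)$, they can be absorbed into the error.
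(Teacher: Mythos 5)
Your central reduction does not work, and the failure is not a matter of finitely many bad directions: restriction to a line can destroy independence \emph{identically}. Take $r=2$, $p_1(\bn)=n_1$, $p_2(\bn)=n_2$ (independent in the sense of the paper, since $\{1,n_1,n_2\}$ is linearly independent). Along any line $\bn=\bh+t\bv$ the restrictions are two polynomials of degree at most one in $t$, so $\{1,\,p_1(\bh+t\bv),\,p_2(\bh+t\bv)\}$ lies in the two-dimensional space of affine polynomials in $t$ and is linearly dependent for \emph{every} choice of $(\bh,\bv)$; the same dimension count kills any family with $\ell$ larger than the maximal degree (e.g.\ $n_1,n_2,n_1n_2$). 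So the $r=1$ case of Theorem~\ref{T:PolyErgodic} simply cannot be invoked on the fibers. Worse, the fiberwise statement you would need is false: for the restricted, now dependent, polynomials $q_1(t)=v_1t+h_1$, $q_2(t)=v_2t+h_2$, the average $\int F_0\cdot T^{q_1(t)}F_1\cdot T^{q_2(t)}F_2\,d\mu$ is a three-term progression-type average whose characteristic factor is the two-step Host--Kra factor, not the Kronecker factor; in a two-step nilsystem one can have functions orthogonal to the Kronecker factor with these averages bounded away from zero. This is precisely the obstruction the paper highlights in Section~\ref{SS:further} (the case $\lambda(m)\lambda(m+n)\lambda(m+2n)$), so no amount of bookkeeping about slicing parallelepipeds or discarding lower-dimensional sets of directions can rescue the argument.

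Your fallback sketch via van der Corput/PET also misses the key point. PET with nonconstant differences only shows that the averages are controlled by \emph{some} Host--Kra factor $\mathcal{Z}_k$, with $k$ typically large; the statement that independence lets one descend all the way to the Kronecker factor is not a formal consequence of ``the van der Corput shifts are nonconstant''. That descent is exactly the nontrivial content of \cite[Lemma~4.3]{FK06}, and it rests on the structure theory of characteristic factors together with qualitative equidistribution of polynomial orbits on nilmanifolds. Accordingly, the paper does not reduce $r>1$ to $r=1$ at all: it observes that the proof of \cite[Lemma~4.3]{FK06} (and of the intermediate convergence result of \cite{FK05}) goes through verbatim in $r$ variables once the single-variable nilmanifold equidistribution results of \cite{L05a} are replaced by their multi-variable counterparts from \cite{L05b}. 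If you want a complete argument, that is the route to follow; a line-restriction reduction is not available for this theorem.
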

This result was proved for $r=1$ in \cite[Lemma~4.3]{FK06}. The proof was based on
the theory of characteristic factors  \cite{HK05, L05c} and on qualitative
equidistribution results on nilmanifolds from \cite{L05a}. An intermediate result
is the convergence result of \cite{FK05} which can be proved for general $r\in \N$
in exactly the same way using equidistribution results for multi-variable sequences  from \cite{L05b} in place of the equidistribution results for single-variable sequences from \cite{L05a}.
For general $r\in \N$, the remaining  part of the proof  is essentially identical to the one given in
 \cite[Lemma~4.3]{FK06} for $r=1$.

\begin{theorem}[\cite{Fr10}]\label{T:HardyErgodic}
Let  $c_1,\ldots, c_\ell\in \R^+$ be distinct non-integers,
$(X,\CX, \mu,T)$ be a system,  and $F_0,$ $\ldots,$ $F_\ell\in L^\infty(\mu)$ be functions.
\begin{enumerate}
\item If  one of the functions is orthogonal to the invariant factor of the system, then
$$
\lim_{N\to \infty}\E_{n\in [N]}
 \int  F_0 \cdot  T^{[n^{c_1}]}F_1\cdots  T^{[n^{c_\ell}]}F_\ell\, d\mu =0.
$$

\item If  one of the functions is orthogonal to the Kronecker factor of the system, then
$$
\lim_{N\to \infty}\E_{n\in [N]}
 \Big|\int  F_0 \cdot  T^{[n^{c_1}]}F_1\cdots  T^{[n^{c_\ell}]}F_\ell\, d\mu \Big|=0.
$$
\end{enumerate}
\end{theorem}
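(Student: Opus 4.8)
\emph{Sketch of the argument.} I would deduce part (ii) from part (i) and then concentrate on part (i). Set $s_j(n):=[n^{c_j}]$ for $j=1,\dots,\ell$ and $s_0(n):=0$. For part (ii), by the Cauchy--Schwarz inequality it suffices to show that $\E_{n\in[N]}\bigl|\int\prod_{j=0}^{\ell}T^{s_j(n)}F_j\,d\mu\bigr|^2\to0$, and expanding the square and using Fubini,
\[
\E_{n\in[N]}\Bigl|\int\prod_{j=0}^{\ell}T^{s_j(n)}F_j\,d\mu\Bigr|^2
=\E_{n\in[N]}\int_{X\times X}\prod_{j=0}^{\ell}(T\times T)^{s_j(n)}\bigl(F_j\otimes\overline{F_j}\bigr)\,d(\mu\times\mu).
\]
So it is enough to invoke part (i) for the product system $(X\times X,\mu\times\mu,T\times T)$ and the functions $G_j:=F_j\otimes\overline{F_j}$, checking that $G_{j_0}$ is orthogonal to the invariant factor of $T\times T$ whenever $F_{j_0}$ is orthogonal to the Kronecker factor of $T$. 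This is standard: the invariant factor of $T\times T$ is the closed span of the functions $g\otimes\overline{h}$ with $g,h$ eigenfunctions of $T$ of the same eigenvalue, and $\langle F_{j_0}\otimes\overline{F_{j_0}},\,g\otimes\overline{h}\rangle=\langle F_{j_0},g\rangle\cdot\overline{\langle F_{j_0},h\rangle}=0$ since $F_{j_0}$ is orthogonal to every eigenfunction.

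For part (i) the plan is a van der Corput and PET induction. The guiding principle is that the sequences $[n^{c_1}],\dots,[n^{c_\ell}]$ with distinct non-integer exponents carry no polynomial or nilpotent structure, so that — in contrast with Theorem~\ref{T:PolyErgodic} — the only characteristic factor appearing is the invariant factor, and no Host--Kra--Ziegler machinery or nilsequence equidistribution is needed. Writing $v_n:=\prod_{j=1}^{\ell}T^{s_j(n)}F_j$, I want $\E_{n\in[N]}\langle v_n,\overline{F_0}\rangle\to0$, and by the Hilbert space van der Corput inequality this follows from
\[
\lim_{H\to\infty}\ \E_{h\in[H]}\ \limsup_{N\to\infty}\Bigl|\E_{n\in[N]}\langle v_{n+h},v_n\rangle\Bigr|=0 .
\]
After a measure-preserving change of variables, $\langle v_{n+h},v_n\rangle$ is again an $\ell$-term ergodic correlation, now with exponent functions given by the differences $s_j(n+h)-s_j(n)=[(n+h)^{c_j}]-[n^{c_j}]$ and by differences of the $s_j(n)$. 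The arithmetic content of one differencing step is that for $c_j>1$ one has $(n+h)^{c_j}-n^{c_j}\sim c_j h\,n^{c_j-1}$, a Hardy function of strictly smaller growth (and still of non-integer degree), while for $c_j<1$ the difference $[(n+h)^{c_j}]-[n^{c_j}]$ takes only finitely many values for each fixed $h$. Iterating this as a PET-type induction on a complexity measure of the tuple of exponent Hardy functions, while keeping track of the distinguished index $j_0$ with $F_{j_0}$ orthogonal to the invariant factor, the induction should terminate at base cases of the form $\E_{n\in[N]}\langle T^{g(n)}F,G\rangle$ with $g$ a non-constant Hardy function that stays far from polynomials with rational coefficients. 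Such an average I would treat with the spectral theorem: writing $\langle T^m F,G\rangle=\int_{\T}e(mt)\,d\sigma_{F,G}(t)$, dominated convergence together with the estimate $\E_{n\in[N]}e([g(n)]t)\to\mathbf{1}_{\{t=0\}}$ — which holds for every $t\in\T$ by the classical Weyl- and van der Corput-type equidistribution results for the sequences $[g(n)]$ — gives $\E_{n\in[N]}\langle T^{g(n)}F,G\rangle\to\sigma_{F,G}(\{0\})=\langle P_{\mathcal I}F,P_{\mathcal I}G\rangle$, which vanishes as soon as $F$ or $G$ is orthogonal to the invariant factor.

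I expect the main obstacle to be the ``Hardy field PET'' bookkeeping underlying part (i): one must design a complexity measure on finite systems of Hardy functions of polynomial growth, subject to the appropriate non-degeneracy (no $g_i$ and no difference $g_i-g_j$ asymptotic to a rational polynomial — which for $g_i=n^{c_i}$ is exactly the hypothesis that the $c_i$ are distinct non-integers), that strictly decreases under each van der Corput step, reaches the base cases above, and never moves the distinguished function $F_{j_0}$ into a position where its orthogonality to the invariant factor is lost. In parallel one has to carry the integer-part errors $\{n^{c_j}\}$ through all the estimates (they are of lower order and do not affect the equidistribution conclusions) and treat separately the small-exponent regime $c_j<1$, where differencing quickly produces bounded exponents. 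These are precisely the technical ingredients assembled in \cite{Fr10} and in the Hardy field equidistribution results it relies on; granting them, parts (i) and (ii) follow as above.
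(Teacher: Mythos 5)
Your reduction of part (ii) to part (i) is exactly the paper's: apply (i) to the product system $(X\times X,\mu\times\mu,T\times T)$ and the functions $F_j\otimes\overline{F_j}$, using the fact that orthogonality of $F$ to the Kronecker factor of $T$ forces $F\otimes\overline{F}$ to be orthogonal to the invariant factor of $T\times T$. For part (i) the paper gives no argument at all beyond quoting \cite[Theorem~2.6]{Fr10} (which yields the $L^2$-convergence of $\E_{n\in[N]}\prod_{j=1}^\ell T^{[n^{c_j}]}F_j$ to the product of the projections of the $F_j$ onto the invariant factor, whence (i) for any choice of the distinguished index, including $j_0=0$), and since you explicitly defer the Hardy-field PET bookkeeping and the equidistribution of $[g(n)]t$ to \cite{Fr10}, your proposal is in substance the same citation-based proof; the van der Corput/PET discussion is an outline of how \cite{Fr10} is proved rather than an independent argument.

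Two concrete soft spots in the supplementary material. First, your van der Corput reduction, as written, sets $v_n:=\prod_{j=1}^\ell T^{[n^{c_j}]}F_j$ and aims to verify the hypothesis of the Hilbert-space van der Corput lemma, whose conclusion is $\norm{\E_{n\in[N]}v_n}_{L^2}\to 0$; this is false in general (take $F_1=\dots=F_\ell=1$), since the true $L^2$-limit is the product of the projections of $F_1,\dots,F_\ell$ onto the invariant factor. Hence this route cannot by itself handle the case where only $F_0$ is orthogonal to the invariant factor: you must either first decompose each $F_j$, $j\geq 1$, into its invariant projection plus a remainder (the purely invariant term integrates against $F_0$ to zero, and every cross term contains a function orthogonal to the invariant factor), or invoke the full convergence statement of \cite[Theorem~2.6]{Fr10}, as the paper does. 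Second, your justification of the auxiliary fact for (ii) via the claim that the invariant factor of $T\times T$ is the closed span of $g\otimes\overline{h}$ with $g,h$ eigenfunctions of equal eigenvalue is only clearly justified for ergodic systems; in the present generality it is safer (and shorter) to argue spectrally as in Section~\ref{SS:ergodic}: by the mean ergodic theorem the squared norm of the projection of $F\otimes\overline{F}$ onto the invariant factor of the product equals $\lim_{N\to\infty}\E_{n\in[N]}\big|\int T^nF\cdot\overline{F}\,d\mu\big|^2$, which vanishes by Wiener's theorem exactly when the spectral measure of $F$ is continuous, i.e.\ when $F$ is orthogonal to the Kronecker factor. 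With these two fixes your argument is correct and coincides with the paper's.
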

The first part is a direct consequence of \cite[Theorem~2.6]{Fr10}. The second part follows by applying the first part
for the product system $(X\times X, \mu\times \mu,T\times T)$ and the functions $F_j\otimes\overline{F_j}$, $j=0,\ldots, \ell$,
and using the well known fact that if $F$ is orthogonal to the Kronecker factor of a system, then $F\otimes \overline{F}$ is
orthogonal to the invariant factor of the product system.

\section{A consequence of strong aperiodicity}

The next result records a key asymptotic property  of single correlations
of strongly aperiodic multiplicative functions.
\begin{theorem}[\cite{MRT15}]\label{T:MRT}
Let $f\in \CM$ be a strongly  aperiodic multiplicative function that
admits
correlations along the  sequence of intervals ${\bf M}:=([M_k])_{k\in\N}$ with $M_k\to \infty$.
Then
$$
\lim_{N\to \infty}\E_{n\in [N]} |\E_{m\in {\bf M}} \, f(m+n)\cdot \overline{f(m)} |=0.
$$
\end{theorem}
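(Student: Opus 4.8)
The plan is to remove the subsequence ${\bf M}$ from the statement, reducing it to the averaged two-point case of the (modified) Elliott conjecture for strongly aperiodic multiplicative functions, which is one of the main results of \cite{MRT15}; the rest is bookkeeping.

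First I would fix $N\in\N$. Since $f$ admits correlations along ${\bf M}=([M_k])_{k\in\N}$, for each $n\in[N]$ the limit
$$
\E_{m\in{\bf M}}f(m+n)\overline{f(m)}=\lim_{k\to\infty}\E_{m\in[M_k]}f(m+n)\overline{f(m)}
$$
exists, so the average over $n\in[N]$ of the finitely many sequences $k\mapsto\bigl|\E_{m\in[M_k]}f(m+n)\overline{f(m)}\bigr|$ converges, as $k\to\infty$, to $\E_{n\in[N]}\bigl|\E_{m\in{\bf M}}f(m+n)\overline{f(m)}\bigr|$. Since $(M_k)_{k\in\N}$ is a subsequence of $\N$, a limit along it is bounded by the limit superior along all of $\N$, whence
$$
\E_{n\in[N]}\bigl|\E_{m\in{\bf M}}f(m+n)\overline{f(m)}\bigr|\le\limsup_{M\to\infty}\E_{n\in[N]}\bigl|\E_{m\in[M]}f(m+n)\overline{f(m)}\bigr|.
$$
Letting $N\to\infty$ and noting the left-hand side is non-negative, it therefore suffices to prove
$$
\lim_{N\to\infty}\ \limsup_{M\to\infty}\ \E_{n\in[N]}\bigl|\E_{m\in[M]}f(m+n)\overline{f(m)}\bigr|=0 .
$$

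This last assertion is exactly the averaged two-point Elliott estimate established in \cite{MRT15} for strongly aperiodic multiplicative functions (there the inner average is taken over $M\le m<2M$, which is equivalent by a dyadic decomposition, as in the remark following Theorem~\ref{T:RT}), and I would invoke it directly. For orientation: strong aperiodicity is precisely the hypothesis that makes the argument work, since by definition it asserts $M(f\cdot\chi;N)\to\infty$ for every Dirichlet character $\chi$, which is exactly the condition under which the Matom\"aki--Radziwi{\l}{\l} short-interval estimate of Theorem~\ref{T:RT} applies to each twist $f\cdot\chi$; these estimates control the ``major arc'' contributions, while an orthogonality argument in the spirit of K\'atai's criterion (Theorem~\ref{T:katai}) reduces the full correlation average to them. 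The weaker assumption of mere aperiodicity does not suffice here; see the discussion after Definition~\ref{D:uniformly} and \cite[Theorem~B.1]{MRT15}.

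The reduction in the second paragraph is routine, and it is the only place where the hypothesis that $f$ admits correlations along ${\bf M}$ is used. The substantive step — the one I regard as the main obstacle — is the cited estimate of \cite{MRT15}, which I treat as a black box: its proof is the technical heart of that paper and rests on a delicate use of the short-interval localization furnished by \cite{MR15} together with Fourier-analytic arguments on those short intervals, none of which I would reproduce.
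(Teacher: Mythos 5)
Your reduction is correct and the citation is legitimate, but you have taken a genuinely different route from the paper. Your first step (removing the subsequence ${\bf M}$: for fixed $N$ the finite average of existing limits is a limit along $(M_k)$, hence bounded by the $\limsup$ over all $M$) is fine, and the resulting statement
$$
\lim_{N\to\infty}\limsup_{M\to\infty}\E_{n\in[N]}\bigl|\E_{m\in[M]}f(m+n)\overline{f(m)}\bigr|=0
$$
does indeed follow from the averaged two-point Elliott theorem of \cite{MRT15} (one should add the small routine step of passing from their quantitative statement, whose hypotheses are phrased as lower bounds on $M(f\cdot\chi;X)$ for $X$ large, to this iterated-limit form — it is not literally ``exactly'' their statement, but strong aperiodicity makes the deduction immediate). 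The paper, however, deliberately does \emph{not} invoke that theorem: it notes only that a quantitative variant of Theorem~\ref{T:MRT} is implicit in \cite{MRT15} and instead gives a self-contained, softer proof whose only number-theoretic input is the short-interval result Theorem~\ref{T:RT}. Concretely, the paper first proves Lemma~\ref{L:inverse}: if the averaged correlation along ${\bf M}$ does not vanish, then by positive definiteness of $c(n)=\E_{m\in{\bf M}}\overline{f(m+n)}f(m)$, Herglotz's theorem and Wiener's lemma produce a single frequency $t$ with $\limsup_N\limsup_M\E_{m\in[M]}|\E_{n\in[N]}f(m+n)e(nt)|>0$; it then kills every such exponential-sum correlation, using Theorem~\ref{T:RT} applied to the twists $f\cdot\chi$ when $t$ is rational and a K\'atai-type orthogonality argument (Theorem~\ref{T:katai}) when $t$ is irrational. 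So your proposal buys brevity by black-boxing the analytic heart of \cite{MRT15} (their Fourier-analytic proof of averaged Elliott), whereas the paper's argument is qualitative, avoids that machinery entirely, and is structured around spectral-measure considerations plus K\'atai's criterion — which is precisely the feature (orthogonality of $F_f$ to the Kronecker factor) that the rest of the paper exploits. Both are valid proofs of the stated theorem; yours is not the paper's.
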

\begin{remark}
  It follows from \cite[Theorem~B.1]{MRT15} that strong aperiodicity cannot be replaced by aperiodicity; in particular,  there exist  an aperiodic multiplicative function $f$, a positive constant $c$,  and a sequence of intervals ${\bf M}:=([M_k])_{k\in\N}$ with $M_k\to \infty$, such that
$$
|\E_{m\in {\bf M}} \, f(m+n)\cdot \overline{f(m)} |\geq c, \quad \text{ for every } n\in \N.
$$
\end{remark}
 A quantitative variant  of Theorem~\ref{T:MRT}
is implicit in \cite{MRT15}. Building on Theorem~\ref{T:RT}, we give in this section a self contained and rather simple proof of Theorem~\ref{T:MRT}.

 We start with the following ``inverse theorem'':
\begin{lemma}\label{L:inverse}
  Let  $a\in \ell^\infty(\N)$ be a sequence that admits  correlations along the sequence of intervals
  ${\bf M}:=([M_k])_{k\in\N}$ and suppose that
$$
\limsup_{N\to \infty} \E_{n\in [N]}|\E_{m\in {\bf M} } \, a(m+n)\cdot  \overline{a(m)} |>0.
$$
Then there exists $t\in \R$ such that
\begin{equation}\label{E:orthogonality}
\limsup_{N\to \infty} \limsup_{M\to \infty}\E_{m\in [M]} |\E_{n\in [N]} \, a(m+n) \cdot e( nt)|>0.
\end{equation}
\end{lemma}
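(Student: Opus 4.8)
The plan is to pass to the ergodic picture via Proposition~\ref{P:correspondence} and then exploit the spectral characterization of the Kronecker factor. First I would apply Proposition~\ref{P:correspondence} to the single sequence $a$ (so $\ell=0$ in that statement), producing a system $(X,\CX,\mu,T)$ and a function $F\in L^\infty(\mu)$ with
$$
\E_{m\in {\bf M}}\, a(m+n)\cdot\overline{a(m)}=\int T^nF\cdot\overline{F}\,d\mu,\qquad n\in\Z.
$$
The hypothesis then says exactly that $\limsup_{N\to\infty}\E_{n\in[N]}\big|\int T^nF\cdot\overline{F}\,d\mu\big|>0$, which by Wiener's theorem (the second bullet in Section~\ref{SS:ergodic}) means the spectral measure $\sigma_F$ of $F$ has a non-trivial atomic part, i.e.\ $F$ is not orthogonal to the Kronecker factor. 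Hence there is a point mass at some $e(t)\in\T$, $t\in\R$: writing $F=F_{\mathrm{at}}+F_{\mathrm{cont}}$ according to the spectral decomposition, the atomic part is a nonzero sum (in $L^2$) of eigenfunctions, and in particular there is an eigenfunction $g$ with $Tg=e(t)\,g$, $\|g\|_2>0$, that is not orthogonal to $F$, so $c:=\int g\cdot\overline F\,d\mu\ne 0$.

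The next step is to detect this eigenvalue back at the level of the sequence $a$. The natural quantity is the ``short Fourier coefficient'' $\E_{n\in[N]}a(m+n)e(nt)$. I would compute its average correlation against $a(m)$: by Proposition~\ref{P:correspondence},
$$
\E_{m\in{\bf M}}\Big(\E_{n\in[N]}a(m+n)\,e(nt)\Big)\cdot\overline{a(m)}
=\E_{n\in[N]}e(nt)\int T^nF\cdot\overline F\,d\mu
=\E_{n\in[N]}e(nt)\,\widehat{\sigma_F}(-n),
$$
and as $N\to\infty$ this converges to $\sigma_F(\{e(t)\})\ne 0$ by the choice of $t$ (the continuous part of $\sigma_F$ contributes $o(1)$ to this Fej\'er-type average, while the atom at $e(t)$ contributes its mass). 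Therefore
$$
\limsup_{N\to\infty}\Big|\E_{m\in{\bf M}}\Big(\E_{n\in[N]}a(m+n)\,e(nt)\Big)\cdot\overline{a(m)}\Big|>0,
$$
and now I bound the left-hand side, using $\|a\|_\infty<\infty$ together with the triangle inequality and the fact that $\E_{m\in{\bf M}}$ is a limit of genuine averages $\E_{m\in[M_k]}$: for each $N$,
$$
\Big|\E_{m\in{\bf M}}\Big(\E_{n\in[N]}a(m+n)\,e(nt)\Big)\overline{a(m)}\Big|
\le \|a\|_\infty\,\limsup_{M\to\infty}\E_{m\in[M]}\Big|\E_{n\in[N]}a(m+n)\,e(nt)\Big|.
$$
Taking $\limsup_{N\to\infty}$ on both sides gives exactly \eqref{E:orthogonality}, with this same $t$. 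A small point to be careful about: the inner average $\E_{m\in{\bf M}}$ in the hypothesis is over a shifted index $m$, while in \eqref{E:orthogonality} $m$ runs over $[M]$; since $a$ is bounded, shifting the window by at most $N$ changes $\E_{m\in[M]}$ by $O(N/M)\to 0$, so passing between $\E_{m\in{\bf M}}$ and $\limsup_{M\to\infty}\E_{m\in[M]}$ is harmless.

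The main obstacle, and the only genuinely delicate point, is making rigorous the claim that the Fej\'er average $\E_{n\in[N]}e(nt)\,\widehat{\sigma_F}(-n)$ isolates precisely the mass $\sigma_F(\{e(t)\})$: this is the classical fact that for a finite measure $\sigma$ on $\T$ one has $\E_{n\in[N]}\widehat{\sigma}(-n)\,e(nt)\to\sigma(\{e(t)\})$, which follows by writing $\widehat\sigma(-n)=\int_\T z^{-n}\,d\sigma(z)$, interchanging the sum and integral, and using $\E_{n\in[N]}(z\,e(t)^{-1})^{n}\to \mathbf 1_{\{z=e(t)\}}$ together with dominated convergence. Everything else is bookkeeping. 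One could alternatively avoid invoking Proposition~\ref{P:correspondence} and argue directly with the spectral measure of the correlation sequence $n\mapsto\E_{m\in{\bf M}}a(m+n)\overline{a(m)}$ (which is positive definite in $n$, hence the Fourier transform of a finite positive measure $\sigma$ on $\T$ by Herglotz); then the hypothesis says $\sigma$ is not purely continuous, pick an atom at $e(t)$, and the same Fej\'er computation as above — carried out without reference to any system — yields \eqref{E:orthogonality}. I would present the proof in this self-contained Herglotz form, since it avoids setting up a system just for a one-dimensional spectral statement.
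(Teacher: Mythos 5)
Your argument is correct. Both your proof and the paper's hinge on the same underlying dichotomy (Herglotz representation of the positive definite correlation sequence, discrete versus continuous part), but the mechanics differ. The paper first applies Cauchy--Schwarz to replace $\E_{n\in[N]}|\E_{m\in{\bf M}}a(m+n)\overline{a(m)}|$ by the square, i.e.\ it weights the correlation by the conjugate correlation $c(n):=\E_{m\in{\bf M}}\overline{a(m+n)}a(m)$, then decomposes $c(n)=\sum_k c_k e(nt_k)+d(n)$ with $d$ tending to $0$ in uniform density, and finally extracts a single frequency $t_k$ from the countable almost periodic part. You instead invoke Wiener's characterization to locate a single atom of the spectral (Herglotz) measure right away, and detect its mass by the classical fact that $\E_{n\in[N]}\widehat{\sigma}(-n)e(nt)$ converges to the point mass at $e(t)$, tested against $\overline{a(m)}$ and bounded trivially by $\norm{a}_\infty$; this replaces the paper's Cauchy--Schwarz plus decomposition-and-pigeonhole step, and in fact makes fully explicit the extraction of $t$ that the paper compresses into ``we deduce from this \dots that there exists $t$''. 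Your detour through Proposition~\ref{P:correspondence} is unnecessary (the correlation sequence $n\mapsto\E_{m\in{\bf M}}a(m+n)\overline{a(m)}$ is positive definite directly, by shift-invariance of the limiting averages), and you rightly note that the self-contained Herglotz version is preferable. Two trivialities to fix in a write-up: the sign convention relating the atom of $\sigma$ to the exponential $e(nt)$ (one may need $-t$ in place of $t$, which is harmless since the lemma only asserts existence of some $t\in\R$), and the observation that $\norm{a}_\infty>0$ under the hypothesis so that dividing by it is legitimate.
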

\begin{proof}
Using the Cauchy Schwarz inequality, our assumption gives that
$$
\limsup_{N\to \infty}\E_{n\in [N]}\big(\E_{m\in {\bf M}}\, a(m+n)\cdot \overline{a(m)} \cdot c(n)\big)>0
$$
where
$$
c(n):=\E_{m\in {\bf M} }\, \overline{a(m+n)}\cdot a(m).
$$
Hence,
\begin{equation}\label{E:positive}
\limsup_{N\to\infty}\limsup_{k\to  \infty}\E_{m\in [M_k]}\,| \E_{n\in [N]} a(m+n)\cdot c(n)|>0.
\end{equation}

The sequence $(c(n))$ is positive definite, hence, by Herglotz's theorem we have
$$
c(n)=\int e(nt)\, d\sigma(t), \quad n\in \N,
$$ for some finite positive  measure $\sigma$ on $\T$. We decompose
$\sigma$ to its discrete and continuous  part, and deduce that $c(n)$ can be expressed in the form
$$
c(n)=\sum_{k=1}^\infty c_k \, e(nt_k)+d(n)
$$
for some $t_k\in \T$ and $c_k\in \R^+$, $k\in \N$,  such that   $\sum_{k=1}^\infty c_k<\infty$, and some
sequence $(d(n))$ which, by Wiener's theorem,  converges to $0$ in uniform density.

We deduce form this and  \eqref{E:positive} that there exists $t\in \R$ such that
$$
\limsup_{N\to\infty}\limsup_{k\to  \infty}\E_{m\in [M_k]}| \E_{n\in [N]} \, a(m+n)\cdot e(nt)|>0
$$
which implies the asserted claim.
\end{proof}

Thus, in order to prove Theorem~\ref{T:MRT} it suffices to prove the following result:
\begin{proposition}
Let $f\in \CM$  be a strongly  aperiodic multiplicative function.
Then
$$
\lim_{N\to \infty} \limsup_{M\to \infty}\E_{m\in [M]} |\E_{n\in [N]}\,  f(m+n) \cdot e(nt)|=0
\quad \text{ for every } t\in \R.
$$
\end{proposition}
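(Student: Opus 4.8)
The plan is to reduce the proposition to Theorem~\ref{T:RT} by a standard Fourier-twisting trick: the inner average $\E_{n\in[N]}f(m+n)\,e(nt)$ is, up to the harmless factor $e(-mt)$, the average of the \emph{twisted} sequence $g(k):=f(k)\,e(kt)$ over the interval $(m,m+N]$. So the content of the statement is that the twist $f\cdot e(\cdot\, t)$ still has mean value $0$ on typical short intervals, and by Theorem~\ref{T:RT} it suffices to show $M\!\left(f\cdot e(\cdot\, t);N\right)\to\infty$. The issue, of course, is that $k\mapsto e(kt)$ is not multiplicative, so $f\cdot e(\cdot\, t)$ is not in $\CM$ and Theorem~\ref{T:RT} does not literally apply; I will instead invoke the characterization underlying Theorem~\ref{T:RT} directly on short-interval averages, or — cleaner — massage the twisting parameter into the archimedean variable that $M(f;N)$ already accounts for.

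Concretely, the first step I would take is to split into the rational and irrational cases for $t$. If $t=a/q$ is rational, then $e(nt)$ is periodic of period $q$, so $\E_{n\in[N]}f(m+n)e(nt)$ is a fixed linear combination over residues $j\bmod q$ of averages $\E_{n\in[N],\, n\equiv j}f(m+n)$, and each of those is (after rescaling $N$) a short-interval average of $f$ restricted to an arithmetic progression; since $f$ strongly aperiodic gives $M(f\cdot\chi;N)\to\infty$ for every Dirichlet character $\chi$, and restriction to a progression mod $q$ is a finite combination of such twists, Theorem~\ref{T:RT} applied to the relevant functions $f\cdot\chi$ closes this case. If $t$ is irrational, the key point is that $e(nt)$ is then genuinely equidistributed and, heuristically, ``pretends to be $n^{i\tau}$ for no real $\tau$'': the relevant quantitative fact is that for irrational $t$ the average $\E_{n\in[N]}f(m+n)e(nt)$ is small on typical $m$ precisely because $f$ cannot simultaneously correlate with both the multiplicative structure and the additive character $e(\cdot\, t)$ — this is again exactly the Matomäki–Radziwiłł phenomenon, and I would cite the general form of Theorem~\ref{T:RT} that allows an additive twist (this is how \cite{MR15} is actually stated, with a character $\chi$ and an additive twist $e(\alpha n)$ both allowed, or with $n^{it}$ twists), so that the hypothesis becomes $M(f\cdot\chi;N)\to\infty$, i.e.\ strong aperiodicity of $f$.

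The clean way to organize the write-up is therefore: (1) observe $\E_{n\in[N]}f(m+n)e(nt)=e(-mt)\E_{n\in[N]}(f\cdot e(\cdot\, t))(m+n)$, so taking absolute values removes the $e(-mt)$ and reduces us to short-interval averages of the single function $g=f\cdot e(\cdot\, t)$; (2) invoke the twisted Matomäki–Radziwiłł theorem (the version of Theorem~\ref{T:RT} permitting an additive rotation, which is available in \cite{MR15}/\cite{MRT15}), whose hypothesis for $g$ is exactly that $f$ is not pretentious to any $\chi(n)n^{it'}$ after accounting for the rotation $t$ — and strong aperiodicity of $f$ guarantees $M(f\cdot\chi;N)\to\infty$ uniformly, which is stronger than what is needed; (3) conclude. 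The main obstacle — really the only one — is making precise that the additive twist $e(nt)$ does not conspire with a near-pretentious $f$ to produce a non-trivial short-interval average: one must check that the ``enemy'' case $f\cdot e(\cdot\,t)$ pretends to be $n^{it'}$ cannot happen when $f$ is strongly aperiodic, which amounts to noting that $e(nt)$ and $n^{it'}$ are never mutually pretentious for $t\neq 0$ (the partial sums $\sum_{p\le N}\frac1p(1-\Re(e(pt)p^{-it'}))$ diverge), so that pretentiousness of $f\cdot e(\cdot\, t)$ to $n^{it'}$ would force pretentiousness of $f$ to $e(-\cdot\, t)n^{it'}$, contradicting $M(f\cdot\chi;N)\to\infty$. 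Once this small pretentious-distance computation is in place, everything else is bookkeeping around Theorem~\ref{T:RT}.
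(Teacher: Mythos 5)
Your rational case is exactly the paper's: reduce to twists by Dirichlet characters and apply Theorem~\ref{T:RT} to $f\cdot\chi$, using that strong aperiodicity is inherited by $f\cdot\chi$. That half is fine.

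The irrational case, as you describe it, has a genuine gap. Theorem~\ref{T:RT} (and the machinery of \cite{MR15} behind it) applies to \emph{multiplicative} functions; the twist $g(k)=f(k)e(kt)$ is not multiplicative, and there is no version of the theorem whose hypothesis is a pretentiousness condition on $g$, nor one whose hypothesis for an additive twist reduces to ``$e(nt)$ and $n^{it'}$ are not mutually pretentious.'' In particular, \cite{MR15} is not stated with additive twists; the twisted short-interval estimates live in \cite{MRT15}, where the frequency $t$ is split into major and minor arcs, the major arcs are handled by exactly your rational-case argument, and the minor arcs are handled by a \emph{bilinear} argument (of K\'atai/Montgomery--Vaughan type) that uses no aperiodicity hypothesis at all. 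So your proposed ``small pretentious-distance computation'' (divergence of $\sum_{p\le N}\frac1p\bigl(1-\Re\bigl(e(pt)p^{-it'}\bigr)\bigr)$, which itself would already require Vinogradov-type bounds on exponential sums over primes) is not the hypothesis of any theorem you can invoke, and it does not by itself yield the conclusion: the real content in the irrational case is the bilinear cancellation, not a distance estimate. If your intention is simply to cite the twisted result of \cite{MRT15} as a black box, the proposition does follow, but then step (3) of your plan is superfluous and the proof is no longer self-contained in the sense the paper aims for, since the point of this section is to deduce Theorem~\ref{T:MRT} from the untwisted Theorem~\ref{T:RT} alone.

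For comparison, the paper's treatment of irrational $t$ proves the stronger statement that the conclusion holds for \emph{every} $f\in\CM$, with no aperiodicity assumption: after reducing (as in \cite{ALR15}) to showing
\begin{equation*}
\lim_{K\to \infty} \frac{1}{b_K}\sum_{n\in [b_K]}f(n)\cdot a(n)=0,\qquad a(n)=\sum_{k\ge 1}c_k\,\one_{[b_k,b_{k+1})}(n)\,e(nt),
\end{equation*}
it applies K\'atai's criterion (Theorem~\ref{T:katai}), so that one only needs $\lim_N\E_{n\in[N]}a(pn)\overline{a(qn)}=0$ for distinct primes $p\neq q$, and this follows from equidistribution of $n\mapsto e(n(p-q)t)$ on the long intervals where $a(pn)\overline{a(qn)}$ is essentially constant. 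Note the structural point you miss: strong aperiodicity is needed only at rational frequencies; at irrational frequencies no multiplicative function in $\CM$ can correlate with $e(nt)$ in this averaged short-interval sense. Replacing your step (2)--(3) for irrational $t$ by such a bilinear/K\'atai argument (or by an explicit citation of the minor-arc estimate in \cite{MRT15}) would close the gap.
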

\begin{proof}
The asserted claim is equivalent to
\begin{equation}\label{E:t}
\lim_{N\to \infty} \limsup_{M\to \infty}\E_{m\in [M]} |\E_{n\in [N]} \, f(m+n) \cdot e((m+n)t)|=0
\quad \text{ for every } t\in \R.
\end{equation}

Suppose first that $t\in \Q$. Then using a standard argument it suffices to show that
\begin{equation}\label{E:rat}
\lim_{N\to \infty} \limsup_{M\to \infty}\E_{m\in [M]} |\E_{n\in [N]} \, f(m+n) \cdot \chi(m+n)|=0
\end{equation}
for every Dirichlet character  $\chi$. Since $f$ is strongly  aperiodic, we have that $f\cdot \chi$ is also
strongly aperiodic; hence, \eqref{E:rat}
is a direct consequence of Theorem~\ref{T:RT} applied to $f\cdot \chi$.

Suppose now that $t$ is irrational. In this case, we will show that \eqref{E:t} holds for every multiplicative function $f\in \CM$.
 It is not hard to check (for details see the proof of Theorem~5 in \cite{ALR15})  that in place of \eqref{E:t}   it suffices to show the following: If $f\in \CM$, $t$ is irrational, and $(b_k)_{k\in \N}$ is an increasing sequence of positive integers with $b_{k+1}-b_k\to \infty$ as $k\to \infty$, then
$$
\lim_{K\to \infty} \frac{1}{b_K}\sum_{k\in [K]}\Big|\sum_{n\in [b_k,b_{k+1})} f(n) \cdot e(nt)\Big|=0.\footnote{This identity
  was also proved in \cite[Theorem~4]{ALR15} using a disjointness argument for topological models of irrational rotations. We give a simpler direct  argument.}
$$
Equivalently, it suffices to show that for every sequence of complex numbers $(c_k)_{k\in \N}$ of modulus $1$
we have
$$
\lim_{K\to \infty} \frac{1}{b_{K+1}}\sum_{k\in [K]}\, c_k \, \sum_{n\in [b_k,b_{k+1})}  f(n) \cdot e(nt)=0,
$$
or equivalently, that
$$
\lim_{K\to \infty} \frac{1}{b_K}\sum_{n\in [b_K]}f(n) \cdot a(n) =0,
$$
where
$$
a(n):=\sum_{k=1}^\infty  c_k\, \one_{[b_k,b_{k+1})}(n) \cdot e(nt).
$$
By Theorem~\ref{T:katai} it suffices to show that for all distinct primes $p,q$ we have
$$
\lim_{N\to \infty} \E_{n\in [N]}\, a(pn)\cdot \overline{a(qn)} =0,
$$
or equivalently, that
\begin{equation}\label{E:C}
\lim_{N\to \infty} \E_{n\in [N]} \, e(n s) \cdot C(n)=0,
\end{equation}
where $s:=(p-q)t$ is irrational and for $k,k'\in \N$ we let
$$
I_{k,k'}:=[b_k/p,b_{k+1}/p)\cap [b_{k'}/q,b_{k'+1}/q)
$$ and
$$
C(n):=\sum_{k,k'\in \N}
 \one_{I_{k,k'}}(n)\cdot c_k\cdot \overline{c_{k'}}.
$$
Note that the sequence of disjoint intervals $(I_{k,k'})_{k,k'\in \N}$ partitions
$\N$ into a set of density $0$ and a sequence of intervals $(J_l)_{l\in \N}$ with $|J_l|\to \infty$ as $l\to \infty$.
Therefore, letting
$$
C'(n):=\sum_{l=1}^\infty
 \one_{J_{l}}(n)\cdot d_l,
$$
where for $l\in \N$ the complex number $d_l$ is equal to $c_k\cdot c_k'$ for  appropriate $k,k'\in \N$,
we have
\begin{equation}\label{E:CC'}
\lim_{N\to \infty}\E_{n\in [N]}|C(n)-C'(n)|=0.
\end{equation}
 Since $s$ is irrational and $|J_l|\to \infty$, we have
$$
\lim_{l\to \infty} \E_{n\in [J_l]} \, e(n s)=0,
$$
hence
$$
\lim_{N\to \infty} \E_{n\in [N]} \, e(n s) \cdot C'(n)=0.
$$
This, combined with \eqref{E:CC'}, implies \eqref{E:C}, and completes the proof.
\end{proof}

\section{Averaged Chowla-Elliott conjecture along independent sequences}
In this section we prove   Theorems~\ref{T:main} and \ref{T:Hardy}. Our approach is to translate
the number theoretic statements to ergodic ones which we then  verify using the ergodic Theorems~\ref{T:PolyErgodic} and \ref{T:HardyErgodic}.
In order to show that the hypothesis of the ergodic theorems are satisfied  we
 use the  number theoretic feedback of Theorems~\ref{T:RT} and \ref{T:MRT}.

\medskip

\noindent {\bf Working Assumptions.} Throughout this section, we assume that a given  collection  $\{f_0,\ldots, f_\ell\}$ of multiplicative functions in $\CM$
 admits correlations along the sequence of intervals ${\bf M}:=([M_k])_{k\in\N}$ (such a sequence always exists),
 and that the measure preserving system
  $(X,\mathcal{X},\mu, T)$ and the functions  $F_0,\ldots,F_\ell\in L^\infty(\mu)$ are defined as in Proposition~\ref{P:correspondence} with $f_0,\ldots, f_\ell$ in place of the sequences $a_0,\ldots, a_\ell$.

\subsection{Orthogonality to the invariant and the Kronecker factor}
The following key facts are  direct consequences of the number theoretic results stated in Theorems~\ref{T:RT} and \ref{T:MRT}:
\begin{proposition}\label{P:key}
   Let  $f_0,\ldots, f_\ell\in \CM$ be multiplicative functions and let  the system  $(X, \CX,\mu,T)$ and the functions $F_0,\ldots, F_\ell\in L^\infty(\mu)$ be as in our working assumptions. We have the following:
\begin{enumerate}
\item If $f_j$ satisfies $M(f_j;N)\to \infty$ as $N\to\infty$ for some $j\in \{0,\ldots, \ell\}$, then $F_j$ is orthogonal to the invariant factor of the system.

\item  If $f_j$ is strongly aperiodic for some $j\in \{0,\ldots, \ell\}$, then  $F_j$ is orthogonal to the Kronecker factor of the system.
\end{enumerate}
\end{proposition}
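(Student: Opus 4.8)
The plan is to connect the two number-theoretic inputs (Theorems~\ref{T:RT} and \ref{T:MRT}) directly to the spectral characterizations of the invariant and Kronecker factors recorded in Section~\ref{SS:ergodic}, using the explicit model built in Proposition~\ref{P:correspondence}. Recall that in our working assumptions $F_j(T^n\omega)=f_j(n)$ for $\omega=(f_0,\ldots,f_\ell)$, and more importantly, by Proposition~\ref{P:correspondence} applied with $s=1$, $b_0=\overline{f_j}$, $b_1=f_j$, we have the identity
$$
\int \overline{F_j}\cdot T^n F_j\, d\mu = \E_{m\in {\bf M}}\, \overline{f_j(m)}\, f_j(m+n), \qquad n\in\Z.
$$
So both conclusions reduce to controlling the Cesàro averages (resp. absolute Cesàro averages) of the left-hand side, and the strategy is simply to recognize that the corresponding arithmetic averages are exactly what Theorems~\ref{T:RT} and \ref{T:MRT} make small.

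For part~(i): by the mean ergodic theorem criterion, $F_j$ is orthogonal to the invariant factor iff $\lim_{N\to\infty}\E_{n\in[N]}\int T^nF_j\cdot\overline{F_j}\,d\mu=0$. First I would note that it suffices to prove the a priori stronger-looking statement $\lim_{N\to\infty}\E_{n\in[N]}\big|\int T^nF_j\cdot\overline{F_j}\,d\mu\big|=0$ (this would in fact also give part~(ii) via the Kronecker criterion, modulo the implication $M(f_j;N)\to\infty \Rightarrow$ strong aperiodicity, which need not hold — see below). Using the displayed identity and the triangle inequality in the $\E_{m\in{\bf M}}$ average, we get
$$
\Big|\int T^nF_j\cdot\overline{F_j}\,d\mu\Big| = \big|\E_{m\in{\bf M}}\, f_j(m+n)\,\overline{f_j(m)}\big| \le \E_{m\in{\bf M}}\,|f_j(m+n)|\cdot|\overline{f_j(m)}| \le 1,
$$
which is not yet enough; instead I would average in $n$ first. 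Averaging over $n\in[N]$ and interchanging the (finite) $n$-average with the ${\bf M}$-limit,
$$
\E_{n\in[N]}\Big|\int T^nF_j\cdot\overline{F_j}\,d\mu\Big| \le \limsup_{k\to\infty}\E_{m\in[M_k]}\,\E_{n\in[N]}\,\big|f_j(m+n)\big| \cdot\, \text{(bounded factor)},
$$
after applying Cauchy–Schwarz in $n$ to separate the two factors $f_j(m+n)$ and $\overline{f_j(m)}$; the point is that the $\overline{f_j(m)}$ factor does not depend on $n$, so the inner $n$-average is bounded by $\E_{n\in[N]}|f_j(m+n)|$ up to the $m$-dependent but uniformly bounded multiplier $|\E_{m'\in{\bf M}}\cdots|$. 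Then Theorem~\ref{T:RT} applied to $f_j$ (whose hypothesis $M(f_j;N)\to\infty$ is exactly what we assumed) gives $\lim_{N\to\infty}\limsup_{M\to\infty}\E_{m\in[M]}|\E_{n\in[N]}f_j(m+n)|=0$, which closes part~(i).

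For part~(ii): by the Wiener/spectral criterion, $F_j$ is orthogonal to the Kronecker factor iff $\lim_{N\to\infty}\E_{n\in[N]}\big|\int T^nF_j\cdot\overline{F_j}\,d\mu\big|=0$, and by the identity above this is precisely $\lim_{N\to\infty}\E_{n\in[N]}\big|\E_{m\in{\bf M}}\,f_j(m+n)\,\overline{f_j(m)}\big|=0$, which is the conclusion of Theorem~\ref{T:MRT} applied to the strongly aperiodic $f_j$ (legitimate since $f_j$ admits correlations along ${\bf M}$ by our working assumptions). So part~(ii) is essentially immediate once the correspondence identity is in hand. The main obstacle, such as it is, lies in part~(i): carefully justifying the interchange of the $\E_{n\in[N]}$-average with the $\E_{m\in{\bf M}}$-limit and the Cauchy–Schwarz step that decouples the two $f_j$ factors so that Theorem~\ref{T:RT} (which only controls the single short average $\E_{n\in[N]}f_j(m+n)$, not a correlation) can be brought to bear — one must be slightly careful that the "bounded factor" $c(m):=\overline{\E_{m'\in{\bf M}}\,f_j(m'+n)\,\overline{f_j(m')}}$-type object introduced by Cauchy–Schwarz is genuinely bounded by $1$ uniformly and that taking $\limsup$ in $k$ before the limit in $N$ is harmless.
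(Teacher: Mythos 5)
Your part (ii) is correct and is exactly the paper's argument: the correspondence identity $\int \overline{F_j}\cdot T^nF_j\,d\mu=\E_{m\in{\bf M}}\,\overline{f_j(m)}\,f_j(m+n)$ turns the Wiener/Kronecker criterion into precisely the conclusion of Theorem~\ref{T:MRT}. The problem is part (i). Your plan there is to prove the \emph{absolute-value} statement $\lim_{N\to\infty}\E_{n\in[N]}\bigl|\int T^nF_j\cdot\overline{F_j}\,d\mu\bigr|=0$ from Theorem~\ref{T:RT} alone, and this cannot work: that statement is false under the hypothesis $M(f_j;N)\to\infty$. For instance, a non-principal Dirichlet character $\chi$ satisfies $M(\chi;N)\to\infty$ (so Theorem~\ref{T:RT} applies), yet $\E_{m\in{\bf M}}\chi(m+n)\overline{\chi(m)}$ is a non-vanishing periodic function of $n$, so the Wiener-type average of its modulus stays bounded away from $0$; such an $f_j$ is orthogonal to the invariant factor but not to the Kronecker factor, which is exactly why the proposition separates the two hypotheses. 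Correspondingly, your key displayed inequality is not valid: read literally it bounds by $\E_{n\in[N]}|f_j(m+n)|$, which carries no cancellation (it is $\approx 1$ for $\lambda$); read as intended, with $|\E_{n\in[N]}f_j(m+n)|$, it asserts that an absolute value sitting \emph{outside} the $\E_{m\in{\bf M}}$-limit but \emph{inside} the $n$-average can be commuted past the $m$-average by Cauchy--Schwarz, which is false (the character example again violates it). Controlling $\E_{n\in[N]}\bigl|\E_{m\in{\bf M}}\cdots\bigr|$ genuinely requires the twisted averages $\E_{n\in[N]}f_j(m+n)e(nt)$ for all $t$ (this is the content of Lemma~\ref{L:inverse} and the proof of Theorem~\ref{T:MRT}, where strong aperiodicity enters); Theorem~\ref{T:RT} only provides the case $t=0$.

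The repair is simpler than what you attempted, and is the paper's proof: for orthogonality to the invariant factor you only need the \emph{signed} Ces\`aro average. Write
\begin{equation*}
\E_{n\in[N]}\int T^nF_j\cdot\overline{F_j}\,d\mu
=\E_{m\in{\bf M}}\Bigl(\overline{f_j(m)}\cdot\E_{n\in[N]}f_j(m+n)\Bigr),
\end{equation*}
where swapping the finite $n$-average with the ${\bf M}$-limit is harmless, and bound the modulus by $\limsup_{k\to\infty}\E_{m\in[M_k]}\bigl|\E_{n\in[N]}f_j(m+n)\bigr|$ using $|f_j|\le 1$; Theorem~\ref{T:RT} then makes this tend to $0$ as $N\to\infty$, and the mean ergodic theorem criterion finishes part (i). No Cauchy--Schwarz and no absolute value in $n$ are needed.
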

\begin{remark}
We deduce that if $f_j$ is the M\"obius or the Liouville function, then the function $F_j$ is
 orthogonal to the Kronecker factor of the corresponding system.
\end{remark}
\begin{proof}

$(i)$ By Theorem~\ref{T:RT} we have
$$
\lim_{N\to \infty} \limsup_{k\to\infty}\E_{m\in  [M_k]} |\E_{n\in [N]} f_j(m+n)|=0
$$
which implies that
$$
\lim_{N\to \infty} \E_{n\in [N]}  \E_{m\in {\bf M}} f_j(m+n) \cdot \overline{f_j(m)}=0.
$$
Using our  working assumptions, we deduce that
$$
\lim_{N\to \infty}\E_{n\in [N]} \int   T^nF_j \cdot  \overline{F_j} \, d\mu =0.
$$
As mentioned in Section~\ref{SS:ergodic},
this implies that the function
$F_j$ is orthogonal to the invariant factor of the system.

$(ii)$ By Theorem~\ref{T:MRT} we have
$$
\lim_{N\to \infty}\E_{n\in [N]} |\E_{m\in {\bf M}} \,f_j(m+n)\cdot \overline{f_j(m)} |=0.
$$
Using our  working assumptions, we deduce that
$$
\lim_{N\to \infty}\E_{n\in [N]} \Big|\int   T^nF_j \cdot  \overline{F_j} \, d\mu\Big|=0.
$$
As mentioned in Section~\ref{SS:ergodic},
this implies that the function $F_j$ is orthogonal to the Kronecker factor of the system.
\end{proof}

\subsection{Proof of Theorems~\ref{T:main} and ~\ref{T:Hardy}}

We can now prove our main results.
\begin{proof}[Proof of Theorems~\ref{T:main}]
Let the system  $(X, \CX,\mu,T)$ and the functions $F_0,\ldots, F_\ell\in L^\infty(\mu)$ be as in our  working assumptions.
Then for every $\bn\in \N^r$ we have that
$$
\E_{m\in {\bf M}}\,  f_0(m) \, f_1(m+p_1(\bn))\cdots f_\ell(m+p_\ell(\bn))=\int F_0\cdot T^{p_1(\bn)}F_1\cdots  T^{p_\ell(\bn)}F_\ell\, d\mu.
$$
Hence, in order to verify \eqref{E:main0} it suffices to show that
\begin{equation}\label{E:main0'}
\UD_{\bn\to\infty}
\Big(\int F_0\cdot T^{p_1(\bn)}F_1\cdots  T^{p_\ell(\bn)}F_\ell\, d\mu\Big)=0.
\end{equation}
 Using our assumption and part $(ii)$ of Proposition~\ref{P:key}, we get that one of the functions $F_0,\ldots, F_\ell$  is orthogonal to the Kronecker factor of the system$(X, \CX,\mu,T)$.
Hence,  Theorem~\ref{T:PolyErgodic} implies \eqref{E:main0'}, and  completes the proof.
\end{proof}

In a similar fashion we prove Theorem~\ref{T:Hardy}.
\begin{proof}[Proof of Theorem~\ref{T:Hardy}]
We argue as in the proof of Theorem~\ref{T:main}, using part $(i)$ and  $(ii)$ of  Proposition~\ref{P:key} and
part $(i)$ and $(ii)$ of Theorem~\ref{T:HardyErgodic}.
\end{proof}

\section{Patterns in arithmetic sequences}
In this section we prove Theorems~\ref{T:sign1} and \ref{T:sign2}.
We first establish a preliminary result needed in the proof of
Theorem~\ref{T:sign2}.
\subsection{A class of strongly aperiodic multiplicative functions}
We will use the following  basic facts about $\D$ (see Definition~\ref{D:D}):
For every $N\in \N$ and $f,g,h\in \CM$ we have
(see for example \cite{GS16})
\begin{equation}\label{E:triangle1}
\D(f,g; N)\leq \D(f,h; N)+\D(h,g;N).
\end{equation}
Also for every $N\in \N$ and all $f_1,f_2,g_1,g_2\in \CM$ we have (see \cite[Lemma~3.1]{GS07})
\begin{equation}\label{E:triangle2}
\D(f_1f_2,g_1g_2; N)\leq \D(f_1,g_1; N)+\D(f_2,g_2; N).
\end{equation}
We will also use two facts about $\D(1,n^{it}; N)$. The first can be found in the proof of
\cite[Lemma C.1]{MRT15}. 
It states that for every  $N\geq 100$ we have
\begin{equation}\label{E:basicest1}
 \D(1,n^{it}; N)=\sqrt{\log(1+|t|\log N)}+O(1), \ \text{ if } |t|\leq 1.
\end{equation}
The second follows from an  estimate proved in \cite[Lemma~2]{MR15} (and is attributed to Granville and Soundararajan)
using Vinogradov's zero free region for the Riemann zeta function. It states that
for every $A>0$ we have
\begin{equation}\label{E:basicest2}
\min_{1\leq |t|\leq  N^A} \D(1,n^{it}; N)\to \infty \quad \text{ as } \ N\to \infty.
\end{equation}

The next criterion enables us to  prove that certain multiplicative functions used in the proof of Theorem~\ref{T:sign2} are strongly aperiodic:
\begin{proposition}\label{P:aperiodic}
Let  $d\in \N$ and    $f\in \CM$ be a multiplicative function
such that $f(p)$ is a $d$-th root of unity
 for all but finitely many primes $p$. Suppose that $\D(f, \chi)=\infty$ for every Dirichlet character $\chi$.
 Then $f$ is strongly aperiodic.
 \end{proposition}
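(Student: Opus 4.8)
The plan is to reduce strong aperiodicity of $f$ to the condition that $M(f\cdot\chi;N)\to\infty$ for each Dirichlet character $\chi$, and to verify this by combining the hypothesis $\D(f,\chi)=\infty$ for all $\chi$ with the root-of-unity constraint on $f(p)$ and the two estimates \eqref{E:basicest1}, \eqref{E:basicest2}. First I would fix a Dirichlet character $\chi$ and set $g:=f\cdot\chi\in\CM$; note $g(p)$ is still a root of unity (a $d\cdot q$-th root, where $q$ is the modulus of $\chi$) for all but finitely many $p$, and $\D(g,\psi)=\infty$ for every Dirichlet character $\psi$ since $\D(f, \overline{\chi}\psi)=\infty$ by hypothesis and $\overline\chi\psi$ is again a Dirichlet character. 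So it suffices to prove: if $g\in\CM$ has $g(p)$ a root of unity of bounded order outside a finite set and $\D(g,\psi)=\infty$ for all Dirichlet characters $\psi$, then $M(g;N)\to\infty$, i.e. $\min_{|t|\le N}\D(g,n^{it};N)^2\to\infty$ as $N\to\infty$.

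Next I would split the range $|t|\le N$ according to the size of $|t|$. For the ``large'' range, say $N^{-A_0}\le|t|\le N$ for a suitable fixed $A_0$ (to be pinned down below), I would like to use \eqref{E:basicest2}: by \eqref{E:triangle1}, $\D(g,n^{it};N)\ge \D(g,1;N)-\D(1,n^{it};N)$ is not directly useful, so instead I would argue via $\D(g,n^{it};N)\ge \D(1,n^{it};N)-\D(g,1;N)$ — also not quite it, since $\D(g,1;N)$ need not be small. The correct move is to exploit the root-of-unity structure: raising to the $d$-th power, $g^d(p)=1$ for all but finitely many $p$, and $n^{idt}$ has $(n^{it})^d=n^{idt}$, so by \eqref{E:triangle2} applied $d$ times, $\D(g^d,n^{idt};N)\le d\,\D(g,n^{it};N)$; hence $\D(g,n^{it};N)\ge \tfrac1d\,\D(g^d,n^{idt};N)\ge \tfrac1d\big(\D(1,n^{idt};N)-\D(1,g^d;N)\big)$, and $\D(1,g^d;N)=O(1)$ because $g^d(p)=1$ outside a finite set. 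Then $\D(1,n^{idt};N)$ is handled by \eqref{E:basicest2} in the range $1\le|dt|\le N^{A}$, giving that this lower bound tends to infinity uniformly over $N^{-A_0}\le |t|\le N$ for any fixed $A_0$ with $dA_0<A$. This already covers everything except the ``small'' range $|t|\le N^{-A_0}$.

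For the small range $|t|\le N^{-A_0}$, the point is that $n^{it}$ is then close to $1$ on $[N]$: by \eqref{E:basicest1} (with $|t|\le 1$), $\D(1,n^{it};N)=\sqrt{\log(1+|t|\log N)}+O(1)=O(1)$ since $|t|\log N\le N^{-A_0}\log N\to 0$. Hence by \eqref{E:triangle1}, $\D(g,n^{it};N)\ge \D(g,1;N)-\D(1,n^{it};N)\ge \D(g,1;N)-O(1)$, and since $\D(g,1)=\infty$ (take $\psi$ the principal character, or rather use $\D(g,\chi_0)=\infty$ where $\chi_0$ is principal — this is part of the hypothesis, noting $\chi_0$ is a Dirichlet character with $\chi_0(p)=1$ for $p\nmid q$ so $\D(g,\chi_0;N)=\D(g,1;N)+O(1)$), we get $\D(g,1;N)\to\infty$ and the small range is controlled too. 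Combining the two ranges, $M(g;N)=\min_{|t|\le N}\D(g,n^{it};N)^2\to\infty$, which is what we needed; applying this to every $g=f\cdot\chi$ gives that $f$ is strongly aperiodic. The main obstacle is getting the uniformity right in the large range — one must choose the cutoff exponent $A_0$ and invoke \eqref{E:basicest2} with an exponent $A$ comfortably larger than $dA_0$ so that the bound $\min_{1\le|s|\le N^A}\D(1,n^{is};N)\to\infty$ genuinely applies after the substitution $s=dt$; everything else is bookkeeping with the triangle inequalities \eqref{E:triangle1} and \eqref{E:triangle2} and the $O(1)$ terms coming from the finitely many exceptional primes.
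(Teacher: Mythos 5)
Your overall strategy is the same as the paper's (raise $f\cdot\chi$ to a suitable power to trivialize its values at primes, then combine \eqref{E:triangle1}--\eqref{E:triangle2} with \eqref{E:basicest1}--\eqref{E:basicest2}, splitting according to the size of $|t|$), but as written there are two problems, the second of which is a genuine gap. First, raising $g=f\cdot\chi$ to the $d$-th power does not trivialize it: $g^d(p)=\chi(p)^d$ for all but finitely many $p$, and this is not $1$ in general (also $\chi(p)$ is a root of unity of order dividing the order of $\chi$, not a $q$-th root). Hence $\D(1,g^d;N)$ need not be $O(1)$. You must instead raise to a power $m=dk$ with $k$ chosen so that $\chi^k(p)=1$ for all but finitely many $p$, which is what the paper does; this is a harmless fix.

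The serious issue is that your case split leaves an uncovered intermediate range of $t$. The estimate \eqref{E:basicest2} only applies when $1\le |dt|\le N^A$, so it says nothing about the portion $N^{-A_0}\le |t|<1/d$ of your ``large'' range; there one only has \eqref{E:basicest1}, which gives $\D(1,n^{idt};N)=\sqrt{\log(1+d|t|\log N)}+O(1)$, and this is $O(1)$ (certainly not uniformly divergent) when $|t|\log N=O(1)$, e.g.\ for $|t|\asymp N^{-A_0}$ or $|t|\asymp 1/\log N$. Your ``small'' range argument does not rescue this, since you only run it for $|t|\le N^{-A_0}$; nor can it simply be extended up to $|t|\le 1/d$, because for $|t|$ of size $(\log N)^{-1/2}$, say, one has $\D(1,n^{it};N)\asymp\sqrt{\log\log N}$, while the hypothesis only gives $\D(f\chi,1;N)\to\infty$, possibly arbitrarily slowly, so $\D(f\chi,1;N)-\D(1,n^{it};N)$ need not be bounded below. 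Closing this regime requires an extra idea, not bookkeeping. The paper handles all of $|t|\le 1$ at once by a dichotomy: either $\D(1,n^{it};N)\le\tfrac12\D(f\chi,1;N)$, in which case \eqref{E:triangle1} gives $\D(f\chi,n^{it};N)\ge\tfrac12\D(f\chi,1;N)$; or else $\tfrac12\D(f\chi,1;N)\le\D(1,n^{it};N)=\D(1,n^{imt};N)+O(1)=\D(f^m\chi^m,n^{imt};N)+O(1)\le m\,\D(f\chi,n^{it};N)+O(1)$ by \eqref{E:basicest1} and \eqref{E:triangle2}, so in either case $\D(f\chi,n^{it};N)\ge\tfrac1{2m}\D(f\chi,1;N)-O(1)\to\infty$. (Alternatively, your scheme could be repaired by splitting at $|t|\asymp C/\log N$ and letting $C\to\infty$ after $N\to\infty$, but some such balancing step is needed and is missing.) The genuinely large range, $|t|\ge 1/d$ in your normalization and $|t|\ge 1$ in the paper's, is handled correctly and as in the paper.
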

\begin{proof}
Let $\chi$ be a Dirichlet character.
There exists $k\in \N$ such that  $(\chi(p))^k=1$ for all but a finite number of primes $p$. Then
for $m=dk$,   using \eqref{E:triangle2} we get  that
$$
m\, \D(f\cdot \chi, n^{it};N)\geq \D(f^m\cdot \chi^m, n^{imt};N)=\D(1, n^{imt};N)+O(1).
$$
Combining this with \eqref{E:basicest2} we deduce that
\begin{equation}\label{E:tgeq1}
\min_{1\leq |t|\leq N} \D(f\cdot \chi, n^{it};N)\to \infty  \quad \text{ as } \ N\to \infty.
\end{equation}

Next, we get lower bounds for $\D(f\cdot \chi, n^{it};N)$ for $|t|\leq 1$.
Using \eqref{E:triangle1} we have  that
$$
\D(f\cdot \chi, n^{it};N)\geq \D(f\cdot \chi, 1;N)-\D(1, n^{it};N)\geq \frac{1}{2}\, \D(f\cdot \chi, 1;N)
$$
unless
 \begin{multline*}
\frac{1}{2}\, \D(f\cdot \chi, 1;N)\leq \D(1, n^{it};N)=\D(1,n^{imt}; N)+O(1)\\
=\D(f^m\cdot \chi^m,n^{imt}; N)+O(1)
\leq
m\, \D(f\cdot\chi,n^{it}; N)+O(1),
\end{multline*}
where the first identity follows
 for $|t|\leq 1$ from \eqref{E:basicest1} and the last estimate from \eqref{E:triangle2}.
In either case, we have
\begin{equation}\label{E:tleq1}
\min_{|t|\leq 1}\D(f\cdot\chi,n^{it}; N)\geq \frac{1}{2m}\, \D(f, \overline{\chi};N)+ O(1).
\end{equation}

Combining  \eqref{E:tgeq1} and \eqref{E:tleq1}, with our assumption $\D(f, \overline{\chi})=\infty$,  we  deduce that
$$
\min_{|t|\leq N} \D(f\cdot \chi,n^{it};N)\to \infty \ \text{ as }\   N\to \infty.
$$
 Hence, $f$ is strongly aperiodic.
\end{proof}

\begin{corollary}\label{C:aperiodic}
Let  $d\in \N$ and    $f\in \CM$ be a multiplicative function
such that $f(p)$ is a non-trivial $d$-th root of unity
 for all but finitely many primes $p$.
 Then $f$ is strongly aperiodic.
 \end{corollary}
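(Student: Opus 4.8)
The plan is to derive this from Proposition~\ref{P:aperiodic}. Its first hypothesis --- that $f(p)$ is a $d$-th root of unity for all but finitely many $p$ --- is immediate from the hypothesis of the corollary, so everything reduces to checking the second hypothesis, namely $\D(f,\chi)=\infty$ for every Dirichlet character $\chi$. Thus the whole task is to prove this divergence, and here one must genuinely use that $f(p)$ is a \emph{non-trivial} $d$-th root of unity for all but finitely many $p$ (not merely a $d$-th root of unity).

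To do this I would fix a Dirichlet character $\chi$ of modulus $q$ and argue by contradiction, assuming $\D(f,\chi)<\infty$. Let $E$ be the finite set of primes consisting of those dividing $q$ together with the finitely many primes at which $f(p)$ fails to be a non-trivial $d$-th root of unity. For $p\notin E$ one has $p\nmid q$, so $\chi(p)^{\varphi(q)}=1$, and hence $f(p)\overline{\chi(p)}$ lies in the fixed finite set $\{\zeta_1\zeta_2\colon \zeta_1^{d}=\zeta_2^{\varphi(q)}=1\}$ of points on the unit circle; consequently there is a constant $c>0$, depending only on $d$ and $q$, with $1-\Re\big(f(p)\overline{\chi(p)}\big)\geq c$ whenever $p\notin E$ and $f(p)\neq\chi(p)$ --- here I use that $|\chi(p)|=1$ makes $f(p)\overline{\chi(p)}=1$ equivalent to $f(p)=\chi(p)$. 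Since every summand in $\D(f,\chi)^2$ is non-negative, dropping the finitely many primes of $E$ gives
$$
\D(f,\chi)^2\ \geq\ c\sum_{\substack{p\notin E\\ f(p)\neq\chi(p)}}\frac1p ,
$$
so $\D(f,\chi)<\infty$ forces $\sum_{p\notin E,\ f(p)\neq\chi(p)}1/p<\infty$.

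I would then reach a contradiction via Dirichlet's theorem in the form $\sum_{p\equiv 1\pmod q}1/p=\infty$. For a prime $p\equiv 1\pmod q$ we have $\chi(p)=1$, while for $p\notin E$ the value $f(p)$ is a non-trivial $d$-th root of unity and hence $f(p)\neq 1=\chi(p)$; so every prime $p\equiv 1\pmod q$ outside the finite set $E$ contributes $1/p$ to the last sum, which therefore diverges --- contradiction. Hence $\D(f,\chi)=\infty$ for every Dirichlet character $\chi$, and Proposition~\ref{P:aperiodic} yields that $f$ is strongly aperiodic.

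I do not expect a genuine obstacle: the proof only combines Proposition~\ref{P:aperiodic}, the isolation of $1$ in any finite set of roots of unity, and the divergence of $\sum 1/p$ along a fixed residue class. The one point that needs a little care is arranging that the finite exceptional set $E$ simultaneously absorbs the primes dividing $q$ (where $\chi$ vanishes) and the finitely many primes where the hypothesis on $f$ fails, so that both the lower bound for $1-\Re(f(p)\overline{\chi(p)})$ and the inequality $f(p)\neq\chi(p)$ are available for all primes $p\equiv 1\pmod q$ outside $E$.
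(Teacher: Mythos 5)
Your proof is correct and is essentially the paper's argument: reduce via Proposition~\ref{P:aperiodic} to showing $\D(f,\chi)=\infty$, then restrict to primes $p\equiv 1\pmod q$ (where $\chi(p)=1$ while $f(p)$ is a non-trivial $d$-th root of unity, giving a uniform lower bound on $1-\Re\big(f(p)\overline{\chi(p)}\big)$) and invoke Dirichlet's theorem for the divergence of $\sum 1/p$ over that residue class. The only differences are cosmetic: you phrase it as a contradiction and route the uniform gap through the finite set of products of roots of unity, whereas the paper bounds the distance directly by $\big(1-\cos(2\pi/d)\big)\sum_{p\equiv 1 \, (q)}1/p + O(1)$.
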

 \begin{proof}
Using Proposition~\ref{P:aperiodic} it suffices to show that $\D(f,\chi)=\infty$ for every Dirichlet character $\chi$. Suppose that $\chi$ has period $m$.
Since $\chi(1)=1$, we have  $\chi(n)=1$ whenever $n\equiv 1 \! \! \! \mod{m}$, and since $f(p)$ is a non-trivial $d$-th root of unity  for all but finitely many primes $p$, we have
$$
\D(f, \chi)^2\geq \Big(1-\cos\big(\frac{2\pi}{d} \big)\Big) \cdot\sum_{p\in \P\cap (m\Z+1)}\frac{1}{p}+ O(1)=\infty,
$$
where the divergence of the last series follows from Dirichlet's theorem. This completes the proof.
 \end{proof}
\subsection{Proof of   Theorem~\ref{T:sign1}  }
Let  $(M_k)_{k\in\N}$ be a sequence of positive integers with $M_k\to \infty$
 so that
  the multiplicative functions $f_0,\ldots, f_\ell\in \CM$  admit correlations  along the sequence of   intervals ${\bf M}=([M_k])_{k\in\N}$.

Note that since for $j=0,\ldots, \ell$ the multiplicative function $f_j$ takes values in $\{-1,+1\}$ and $\epsilon_j\in \{-1,+1\}$, we have
$$
{\bf 1}_{f_j=\epsilon_j}(n)=\frac{1+ \epsilon_j f_j(n)}{2}, \quad n\in\N.
$$
Hence,
 $$
 d_{\bf M}(\Lambda_{ \bn, {\bf \epsilon}})= \frac{1}{2^{\ell+1}}\, \E_{m\in {\bf M}}\prod_{j=0}^\ell
 \big(1+\epsilon_jf_j(m+p_j(\bn))\big)
 $$
where $p_0:=0$. Expanding the product to $2^{\ell+1}$ terms and using Theorem~\ref{T:main}, we deduce that $
\UD_{\bn\to\infty}(d_{\bf M}(\Lambda_{\bn,{\bf \epsilon}}))=2^{-(\ell+1)},
$
 completing the proof of  Theorem~\ref{T:sign1}.
\subsection{Proof of   Theorem~\ref{T:sign2}  }
Let $b\in \N$    and $a\in \{0,\ldots, b-1\}$.
We first express the indicator $\one_{ [\omega]_{b}=a}$ as a weighted average of  multiplicative functions.
Let $\zeta$ be a root of unity of order $b$.
We define the multiplicative function
$f_{b}$  by
$$
f_{b}(p^j):=\zeta, \quad j\in \N, \ \ p\in \P.
$$
Then
\begin{equation}\label{E:multident}
\one_{ [\omega]_{b}=a}(n)=  \frac{1}{b}\sum_{r=0}^{b-1} \zeta^{-ar}(f_b(n))^r, \quad n\in\N.
\end{equation}

Let  $(M_k)_{k\in\N}$ be a sequence of positive integers with $M_k\to \infty$
 such that
  the multiplicative functions
  $f_{b_0},\ldots, f_{b_0}^{b_0-1}, \ldots, f_{b_\ell},\ldots, f_{b_\ell}^{b_\ell-1}$
  admit correlations  along the sequence of   intervals ${\bf M}=([M_k])_{k\in\N}$.
Using \eqref{E:multident} we get that
 \begin{equation}\label{E:multiformula}
 d_{\bf M}(\Lambda_{ \bn, {\bf a}})= \E_{m\in {\bf M}}\prod_{j=0}^\ell
 \Big(\frac{1}{b_j}\sum_{r=0}^{b_j-1} \zeta^{-ar}(f_{b_j}(m+p_j(\bn)))^r\Big)
 \end{equation}
where $p_0:=0$.

 Note that by Corollary~\ref{C:aperiodic}, for $j=0,\ldots, \ell$ the multiplicative functions  $f_{b_j}^r$ are strongly aperiodic  for  $r=1,\ldots, b_j-1$.
  Expanding the product in \eqref{E:multiformula} to  $\prod_{j=0}^\ell b_j$ terms and using Theorem~\ref{T:main}, we deduce that
$
\UD_{\bn\to\infty}(d_{\bf M}(\Lambda_{\bn,{\bf a}}))=(\prod_{j=0}^\ell b_j)^{-1},$
  completing the proof of  Theorem~\ref{T:sign2}.

 In a similar fashion we can prove a modification of Theorem~\ref{T:sign2}
 for the arithmetic function $\Omega$ in place of $\omega$;
 the only difference is that we use in place of the multiplicative function $f_b$ the
  completely multiplicative function $f'_b$ defined  by $f'_b(p^j):=\zeta^j$,  for all  $j\in \N$ and primes $p$.


\begin{thebibliography}{99}

\bibitem{ALR15} H.~el Abdalaoui, M.~Lemanczyk,  T.~Rue.
  Automorphisms with quasi-discrete spectrum, multiplicative functions and average orthogonality along short intervals.
  To appear in {\em Int. Math. Res. Not.}   \texttt{arXiv:1507.04132}.

 \bibitem{BE11} Y.~Buttkewitz,  C.~Elsholtz. Patterns and complexity of multiplicative functions. {\em J. Lond. Math.
Soc.}  \textbf{84} (2011), no. 3, 578--594.

\bibitem{Ch65} S.~Chowla. The Riemann Hypothesis and Hilbert’s Tenth Problem. Mathematics and Its
Applications {\bf 4}, {\em Gordon and Breach Science Publishers}, New York, 1965.


\bibitem{El90} P.~Elliott. Multiplicative functions $|g|\leq 1$ and their convolutions: An overview.
S\'eminaire de Th\'eorie des Nombres, Paris 1987-88.
{\em Progress in Mathematics} {\bf  81} (1990), 63--75.

\bibitem{El94} P.~Elliott. On the correlation of multiplicative and the sum of additive arithmetic functions. {\em Mem. Amer. Math. Soc. } {\bf 112} (1994), no. 538, viii+88pp.


\bibitem{Fr10} N.~Frantzikinakis. Multiple recurrence and convergence for Hardy sequences of polynomial
growth.  {\em J. Analyse Math.}  \textbf{112} (2010), 79--135.

 \bibitem{Fr15} N.~Frantzikinakis. A multidimensional Szemeredi theorem for Hardy sequences of different growth.
{\em  Trans. Amer.
    Math. Soc.} \textbf{367} (2015),  no. 8,  5653--5692.

 \bibitem{FH16}
 N.~Frantzikinakis,  B.~Host.
Asymptotics for  multilinear averages of multiplicative functions.
 {\em Mathematical Proceedings of the Cambridge Philosophical Society} \textbf{161} (2016), no. 1,  87-101.




 	\bibitem{FH14}
	N.~Frantzikinakis,   B.~Host.
	Higher order Fourier analysis of multiplicative functions and applications.
	{\em J. Amer. Math. Soc.} {\bf 30}  (2017), 67--157.




  \bibitem{FK05} N.~Frantzikinakis,  B.~Kra.  Polynomial averages
  converge to the product of integrals. {\em Isr. J. Math.}
  \textbf{148} (2005), 267--276.

 \bibitem{FK06} N.~Frantzikinakis, B.~Kra. Ergodic averages for
independent polynomials and applications. {\em J. London Math.
Soc.} \textbf{74} (2006), no. 1,  131--142.

\bibitem{GS07}  A.~Granville,   K.~Soundararajan.
Large character sums: pretentious characters and the Pólya-Vinogradov theorem.
{\em J. Amer. Math. Soc.} {\bf 20} (2007), no. 2, 357--384.


\bibitem{GS16} A.~Granville,   K.~Soundararajan. Multiplicative Number Theory:
The pretentious approach. Book manuscript  in preparation.

  \bibitem{GT10}
 B.~Green,  T.~Tao.
  Linear equations in the primes.
{\em Ann. of Math.} {\bf 171} (2010), 1753--1850.


\bibitem{GT12b}
B.~Green,  T.~Tao.
The M\"obius function is strongly orthogonal to nilsequences. {\em
Ann. of Math.} {\bf 175}
 (2012), no. 2, 541--566.

\bibitem{GTZ12c}
 B.~Green, T.~Tao,  T.~Ziegler.
An inverse theorem for the Gowers $U^{s+1}[N]$-norm. {\em Ann. of Math.} {\bf 176} (2012), no. 2,
1231--1372.



 \bibitem{Hal68} G.~Hal\'asz.
 \"{U}ber die Mittelwerte multiplikativer zahlentheoretischer Funktionen. {\em  Acta Math. Acad. Sci. Hung.}  {\bf 19}  (1968), 365--403.

\bibitem{HPW85} G.~Harman, J.~Pintz,  D.~Wolke. A note on the M\"obius and Liouville functions. {\em Studia Sci. Math. Hungar.} \textbf{20} (1985), no. 1-4, 295--299.

\bibitem{Hi86} A.~Hildebrand. On consecutive values of the Liouville function. {\em Enseign. Math.} (2) \textbf{32} (1986), no. 3-4, 219--226.



\bibitem{HK05} B.~Host,  B.~Kra.
Nonconventional ergodic averages and nilmanifolds. {\em Ann. of
Math.}   \textbf{161}   (2005), 397--488.


\bibitem{K86} I.~K\'atai. A remark on a theorem of H. Daboussi. {\em Acta Math. Hungar.}
 {\bf 47}
(1986), 223--225.


\bibitem{L05a} A.~Leibman.
Pointwise convergence of ergodic averages for polynomial
sequences of rotations of a nilmanifold. {\em Ergodic Theory Dynam.
Systems} {\bf 25}  (2005), no. 1,   201--213.


\bibitem{L05b} A.~Leibman.
Pointwise convergence of ergodic averages for polynomial actions
of $\mathbb{Z}^d$ by translations on a nilmanifold. {\em Ergodic
Theory Dynam. Systems} \textbf{25} (2005), no. 1,  215--225.


\bibitem{L05c} A.~Leibman.  Convergence of multiple ergodic
  averages along polynomials of several variables.  {\it Isr. J.
    Math.} \textbf{146} (2005), 303--316.


\bibitem{MR15}
K.~Matom\"aki, M.~Radziwi{\l}{\l}.
 Multiplicative functions in short intervals.  {\em Ann. of Math.}   {\bf 183} (2016), 1015--1056.

\bibitem{MRT15}
 K.~Matom\"aki, M.~Radziwi{\l}{\l},  T.~Tao.
 An averaged form of Chowla's conjecture.  {\em Algebra \& Number Theory} {\bf 9} (2015), 2167--2196.


\bibitem{MRT15b}
 K.~Matom\"aki, M.~Radziwi{\l}{\l},  T.~Tao.
 Sign patterns of the Liouville and M\"obius functions.  {\em Forum Math. Sigma} {\bf 4} (2016).

 \bibitem{Tao15} T.~Tao. The logarithmically averaged Chowla and Elliott conjectures for two-point correlations. {\em Forum of Mathematics, Pi}  {\bf 4} (2016).

\bibitem{Tao16} T.~Tao.  Equivalence of the logarithmically averaged Chowla and Sarnak conjectures.  Preprint (2016)  \texttt{arXiv:1605.04628}.

\bibitem{TZ16a} T.~Tao, T.~Ziegler.     Concatenation theorems for anti-Gowers-uniform functions and Host-Kra characteristic factors. {\em Discrete Analysis} 2016:13, 60 pp.


\bibitem{TZ16b} T.~Tao,  T.~Ziegler. Polynomial patterns in the primes. Preprint (2016)
\texttt{arXiv:1603.07817}.

\end{thebibliography}
\end{document}